
\documentclass[12pt,leqno]{amsart}
\usepackage{epsfig}
\usepackage{amscd}

\usepackage{amsmath}
\usepackage{amsfonts,amssymb}
\usepackage{euscript}
\usepackage{textcomp}
\usepackage{amsthm}
\usepackage[matrix,arrow,curve]{xy}

\setlength{\textheight}{8.5in} \setlength{\textwidth}{6in}
\setlength{\topmargin}{0in} \setlength{\oddsidemargin}{0.25in}
\setlength{\evensidemargin}{0.25in}

\mathsurround=2pt

\newtheorem{theo}{Theorem}[section]
\newtheorem{lemma}[theo]{Lemma}
\newtheorem{defi}[theo]{Definition}
\newtheorem{prop}[theo]{Proposition}

\newtheorem{cor}[theo]{Corollary}
\newtheorem{remark}[theo]{Remark}

\numberwithin{equation}{section}

\def\R{\mathbb{R}}

\def\Z{\mathbb{Z}}

\raggedbottom
\bibcite{...}{...} 

\def\coh{\operatorname{coh}}

\def\O{{\mathcal O}}

\def\pre-tr{\operatorname{pre-tr}}

\def\Hom{\operatorname{Hom}}
\def\End{\operatorname{End}}



\newcommand{\bbR}{{\mathbb R}}

\newcommand{\bbZ}{{\mathbb Z}}

\newcommand{\bbP}{{\mathbb P}}

\newcommand{\cO}{{\mathcal O}}

\newcommand{\cL}{{\mathcal L}}

\newcommand{\cA}{{\mathcal A}}

\newcommand{\cE}{{\mathcal E}}

\newcommand{\cR}{{\mathcal R}}
\newcommand{\cS}{{\mathcal S}}

\newcommand{\Perf}{\operatorname{Perf}}

\newcommand{\im}{\operatorname{im}}

\newcommand{\Ext}{\operatorname{Ext}}

\newcommand{\coker}{\operatorname{coker}}

\renewcommand{\k}{\mathsf k}
\newcommand{\TT}{\mathcal T}
\newcommand{\mmod}{\mathrm{{-}mod}}
\newcommand{\modd}{\mathrm{mod{-}}}

\newcommand{\Modd}{\mathrm{Mod{-}}}
\renewcommand{\AA}{\mathcal A}

\newcommand{\s}{\sigma}

\newcommand{\xra}{\xrightarrow}
\renewcommand{\le}{\leqslant}
\renewcommand{\ge}{\geqslant}

\newcommand{\bul}{\bullet}

\DeclareMathOperator{\Supp}{\mathrm{Supp}}
\DeclareMathOperator{\qcoh}{\mathrm{qcoh}}

\begin{document}

\title[Thick subcategories on curves]{Thick subcategories on curves}

\author{Alexey Elagin}
\address{Institute for Information Transmission Problems (Kharkevich Institute), Moscow, RUSSIA;
National Research University Higher School of Economics,
RUSSIA
}
\email{alexelagin@rambler.ru}

\author{Valery A.~Lunts}
\address{Department of Mathematics, Indiana University,
Bloomington, IN 47405, USA;
National Research University Higher School of Economics,
RUSSIA
} 
\email{vlunts@indiana.edu}

\begin{abstract} We classify triangulated categories that are equivalent to finitely generated thick subcategories $\TT \subset D^b(\coh C)$ for smooth projective curves $C$ over an algebraically closed field.
\end{abstract}

\maketitle
\footnotetext[0]{The first author is partially supported within the framework of the HSE University Basic Research Program, Russian Academic Excellence Project '5-100' and the Simons-IUM fellowship.
The second author is partially supported by Laboratory of Mirror Symmetry NRU HSE, RF Government grant, ag. \textnumero 14.641.31.0001.}

\section{Introduction}

Let $C$ be a smooth projective curve of genus $g$ over an algebraically closed field~$\k$. We classify all finitely generated thick (triangulated) subcategories of $D^b(\coh C)$. Namely we prove that all such subcategories $\TT$ (if $\TT\neq 0,D^b(\coh C)$) are \emph{quiver-like}, that is there is a finite quiver $Q$ and an equivalence of categories
$$D_0^b(Q)\stackrel{\sim}{\to}\TT$$
where $D_0^b(Q)\subset D^b(Q)$ is the full triangulated subcategory generated by the simple modules corresponding to vertices (Theorem \ref{main theo}).

We then classify the quivers $Q$ which can be \emph{realized} on curves in this way (Theorem \ref{answer question one}). We also show that if $Q$ and $Q'$ are realizable quivers and there is an equivalence
$$D^b_0(Q)\simeq D^b_0(Q')$$
then $Q\simeq Q'$ (Corollary \ref{cor_unique}).

As a byproduct we obtain the following result (Propositions \ref{g=0 and g=1}, \ref{g=2}):

\textit{If $g\geq 2$, there is an infinite  descending binary tree of finitely generated thick subcategories of $D^b(\coh C)$. On the other hand, if $g=0,1$, no infinite descending chain of such subcategories exists (as follows easily from classical results of Grothendieck and Atiyah)}.

This phenomenon should be compared with the case of $0$-dimensional schemes. Namely, let $R$ be an artinian algebra. If $R$ is a complete intersection, then there are no infinite descending chains of finitely generated thick subcategories of $D^b(R \mmod)$, see~\cite{CI}. On the other hand in \cite{EL} simple examples of a non-complete intersection $R$ are constructed, such that there exists a descending binary tree of such subcategories.

The paper is organized as follows. Section 2 contains a brief reminder on triangulated categories and enhancements. In Section 3 we study quiver-like categories. In Section 4 we formulate our main observation: all thick subcategories on curves are quiver-like. In Section 5 we classify quivers which are realizable on curves (Definition \ref{def realizable}).

\section{A reminder about generation and enhancements of triangulated categories}

We fix a field $\k$. All our categories are $\k$-linear.
References for triangulated and dg categories include \cite{BK}, \cite{BoNe}, \cite{BLL}, \cite{Dr}, \cite{ELO}, \cite{Ke}.

If $\TT$ is a triangulated category and $X,Y$ are objects in $\TT$, we will freely use the equivalent notation for the corresponding space of morphisms
$$\Hom (X,Y[n])=\Hom ^n(X,Y)=\Ext ^n(X,Y).$$
If $X=Y$, then we also consider the graded algebra
$$\Ext ^\bullet _{\TT}(X,X)=\Ext ^\bullet (X,X)=\bigoplus _n\Ext ^n(X,X).$$

A triangulated category is \emph{$\Ext$-finite} if the space
$\bigoplus _n\Hom (X,Y[n])$
is finite dimensional for all objects $X,Y$.

We say that a triangulated category $\TT$ is \emph{non-split  generated} by a collection of objects $X_1,\ldots ,X_n$ if $\TT$ is the smallest full triangulated subcategory of $\TT$ which contains the objects $X_1,\ldots ,X_n$. We denote this $\TT=[X_1,\ldots ,X_n]$.

We say that a triangulated category $\TT$ is (split) \emph{generated} by a collection of objects $X_1,\ldots ,X_n$ if $\TT$ is the smallest full triangulated subcategory of $\TT$ which contains the objects $X_1,\ldots ,X_n$ and which is closed under direct summands in $\TT$. We denote this $\TT=\langle X_1,\ldots ,X_n\rangle$.

A \emph{thick} subcategory of a triangulated category $\TT$ is a full triangulated subcategory which is closed under direct summands in $\TT$.

A category is \emph{Karoubian} if it is idempotent complete, i.e. if every idempotent splits. Note that a thick subcategory of a Karoubian triangulated category is also Karoubian. If a triangulated category $\TT$ is closed under countable direct sums, then it is Karoubian.

For an abelian category $\AA$ we denote by $D(\AA)$ its (unbounded) derived category and by  $D^b(\AA)$ its bounded derived category, these categories are triangulated. If $\AA$ has countable direct sums then $D(\AA)$ is  Karoubian. Therefore all thick subcategories of $D(\AA)$ are also Karoubian.

If $\cS$ is a dg category we denote by $[\cS]$ its \emph{homotopy} category. If the dg category~$\cS$ is \emph{pre-triangulated}, then $[\cS]$ is triangulated.

An \emph{enhancement} of a triangulated category $\TT$ is a pre-triangulated dg category $\cS$ together with an equivalence of triangulated categories $[\cS]\stackrel{\sim}{\to}\TT$.
This allows us to consider an object $X \in \TT$ as an object in the dg category $\cS$. Then we denote its endomorphism dg algebra by
$$\bbR \End (X):=\End _{\cS}(X).$$
This dg algebra is well defined up to a quasi-equivalence and its cohomology algebra is
$$H^\bullet (\bbR \End (X))=\Ext ^\bullet _{\TT}(X,X).$$

For a dg algebra $\cE$ we consider the triangulated category $\Perf (\cE)$. This is the thick subcategory in $D(\cE)$ (= the derived category of right dg $\cE$-modules) which is (split) generated by the dg $\cE$-module $\cE$. The triangulated category $\Perf (\cE)$ is Karoubian and it has a natural enhancement. If dg algebras $\cE$ and $\cE'$ are quasi-isomorphic, then the triangulated categories $\Perf (\cE)$ and $\Perf (\cE ')$ are equivalent. A dg algebra is \emph{formal} if it is quasi-isomorphic to its cohomology graded algebra.

We will often use the following standard fact.

\begin{prop} \label{standard equiv} Let $\TT$ be a Karoubian triangulated category which has an enhancement. Assume that $\TT$ is generated by an object $X$, i.e. $\TT =\langle X\rangle$. Consider the dg algebra $\cE =\bbR \End (X)$. Then there exists a natural equivalence of categories
$$\TT\simeq \Perf (\cE).$$
\end{prop}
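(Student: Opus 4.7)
The plan is to construct the equivalence via a dg Yoneda-type embedding applied to the generator $X$, and then verify it is fully faithful and essentially surjective by a standard dévissage. Let $\cS$ be a pre-triangulated dg enhancement with $[\cS]\simeq \TT$, so $X$ may be regarded as an object of $\cS$ and $\cE=\End_{\cS}(X)$ is a dg algebra whose cohomology is $\Ext^\bullet_\TT(X,X)$. The dg functor $Y\mapsto \Hom_{\cS}(X,Y)$ sends objects of $\cS$ to right dg $\cE$-modules; on homotopy categories it yields an exact functor
$$F\colon \TT\simeq [\cS]\lto D(\cE),$$
with $F(X)=\cE$, the free rank-one dg module.

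Next I would establish fully faithfulness. By construction $F$ induces the tautological identification $\Ext^n_\TT(X,X)=H^n(\cE)=\Hom_{D(\cE)}(\cE,\cE[n])$. Fix an integer $m$ and consider the full subcategory of those $Y\in\TT$ for which the map
$$\Hom^\bullet_\TT(X[m],Y)\lto \Hom^\bullet_{D(\cE)}(\cE[m],F(Y))$$
is an isomorphism. Because both sides are cohomological in $Y$, the five lemma shows this subcategory is triangulated and closed under direct summands; it contains $X$, hence it equals $\langle X\rangle =\TT$. A symmetric dévissage in the first variable, using the analogous subcategory of $Z\in\TT$ on which $F_{Z,Y[n]}$ is an isomorphism for every $Y\in\TT$ and every $n$, upgrades this to fully faithfulness of $F$ on all of $\TT$.

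Finally I would pin down the essential image. Since $F$ is fully faithful and exact, and $\TT$ is Karoubian, the essential image $F(\TT)\subset D(\cE)$ is a thick triangulated subcategory containing $F(X)=\cE$. By the very definition of $\Perf(\cE)$ as the thick subcategory of $D(\cE)$ generated by $\cE$, we have $\Perf(\cE)\subset F(\TT)$. Conversely, $F(\TT)$ is split-generated by $F(X)=\cE$ inside $D(\cE)$, so $F(\TT)\subset \Perf(\cE)$. Thus $F$ restricts to the claimed equivalence $\TT\simeq \Perf(\cE)$.

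The only real obstacle is propagating the dévissage from the non-split subcategory $[X]$ to the split-generated subcategory $\langle X\rangle=\TT$; this is precisely the step where the hypothesis that $\TT$ is Karoubian and the observation that the auxiliary subcategory is closed under direct summands intervene, and they are built into the statement.
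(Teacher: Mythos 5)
Your argument is correct. The paper treats Proposition \ref{standard equiv} as a well-known fact and supplies no proof of its own, only pointing to the general references on dg categories listed at the start of Section 2 (Keller, Drinfeld, etc.); your proposal correctly reconstructs the standard dg-Yoneda argument: choose an enhancement $\cS$, form the right dg $\cE$-module $\Hom_{\cS}(X,-)$, deduce full faithfulness by dévissage since the comparison subcategory is thick and contains $X$, and identify the essential image with $\Perf(\cE)$. One small clarification to your closing remark: the Karoubian hypothesis on $\TT$ is not actually what makes the dévissage pass from $[X]$ to $\langle X\rangle$ --- the auxiliary comparison subcategory is closed under direct summands for free, because the natural comparison map is block-diagonal on a direct sum. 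Where Karoubianness is genuinely used is in the last step, to show that the essential image $F(\TT)$ is closed under direct summands inside $D(\cE)$ (split the idempotent pulled back through the already-established fully faithful $F$), which is what gives the inclusion $\Perf(\cE)\subset F(\TT)$.
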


Finally, for a ring $R$ we denote by $\Modd R$ (resp. $\modd R$) the category of right $R$-modules (resp. finitely generated right $R$-modules).

\section{Quiver-like triangulated categories}
A \emph{quiver} means a \emph{finite} quiver, i.e. the set of vertices and arrows is finite.

Let $Q$ be a quiver with $n$ vertices $v_1,\ldots ,v_n$. Let $\k Q$ be the corresponding (hereditary) path algebra. Denote by $D^b(Q)=D^b(\Modd \k Q)$ the bounded derived category of right $\k Q$-modules. Since the algebra $\k Q$ is hereditary, every object in
$D^b(Q)$ is isomorphic to the direct sum of its cohomology.

Let $\cR \subset \k Q$ denote the radical of $\k Q$, i.e. $\cR $ is the $2$-sided ideal generated by all arrows. Denote by $(\modd \k Q)_{\cR }$ the abelian category  of finitely generated $\cR $-torsion $\k Q$-modules. (These modules are automatically finite dimensional.) 
Define $D^b_0(Q)\subset D^b(Q)$ as the subcategory of all finite complexes with cohomology in
$(\modd \k Q)_{\cR }$.
The triangulated categories $D^b(Q)$ and $D^b_0(Q)$ are Karoubian.

Let $s_1,\ldots ,s_n$ be the simple $\k Q$-modules, corresponding to the vertices.
For future reference we record the following easy fact.

\begin{lemma} \label{triv lemma} In the above notation the following holds:

(1) Every object in $(\modd \k Q)_{\cR }$ has a finite filtration with $s_i$'s as subquotients. The corresponding associated graded module is independent of the filtration and
\begin{equation}
\label{eq_dimvect}
K_0((\modd \k Q)_{\cR })\simeq\bigoplus _i\bbZ [s_i].
\end{equation}

(2) Every object in $D_0^b(Q)$ is isomorphic to the direct sum of its cohomology, and
$$K_0(D_0^b(Q))\simeq\bigoplus _i\bbZ [s_i].$$

(3) $D_0^b(Q)=\langle s_1,\ldots ,s_n\rangle=[s_1,\ldots ,s_n]$, i.e. $D^b_0(Q)$ is non-split generated by the $s_i$'s.
\end{lemma}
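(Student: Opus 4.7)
For part (1), the key input is that a finitely generated $\cR$-torsion module $M$ is annihilated by some common power $\cR^N$ (a finite generating set is killed by a common power of $\cR$). The descending chain $M\supseteq M\cR\supseteq\cdots\supseteq M\cR^N=0$ has subquotients which are modules over the semisimple ring $\k Q/\cR\cong\k^n$, so each decomposes as a sum of $s_i$'s; refining produces a composition series with the $s_i$ as factors. Independence of the associated graded is Jordan--Hölder, made concrete by the observation that the multiplicity of $s_j$ in any such composition series equals $\dim_\k(Me_j)$, where $e_j$ is the idempotent at $v_j$. The $K_0$ isomorphism~(\ref{eq_dimvect}) then follows: the dimension vector $[M]\mapsto(\dim_\k Me_i)_i$ is additive on short exact sequences, sends $[s_i]$ to the $i$th basis vector, and every $[M]$ equals $\sum_i(\dim_\k Me_i)[s_i]$ by the filtration.

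For part (2), the pivotal fact is that $\k Q$ is hereditary, so $\Ext^{\ge 2}_{\k Q}(-,-)=0$. I would induct on cohomological amplitude: if $X\in D^b(Q)$ has top nonzero cohomology in degree $b$, the standard truncation triangle
\[
\tau_{\le b-1}X\to X\to H^b(X)[-b]\to\tau_{\le b-1}X[1]
\]
splits provided its connecting morphism vanishes. Inductively assuming $\tau_{\le b-1}X\cong\bigoplus_{i<b}H^i(X)[-i]$, this morphism lies in $\bigoplus_{i<b}\Ext^{b-i+1}_{\k Q}(H^b(X),H^i(X))$, which vanishes because $b-i+1\ge 2$. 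Hence $X\cong\bigoplus_iH^i(X)[-i]$. The $K_0$ statement then follows by combining the Euler characteristic $[X]\mapsto\sum_i(-1)^i[H^i(X)]$ with part~(1).

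For part (3), the inclusions $[s_1,\ldots,s_n]\subseteq\langle s_1,\ldots,s_n\rangle\subseteq D_0^b(Q)$ are automatic (the right-hand side is a thick subcategory of $D^b(Q)$ containing the $s_i$), so it suffices to show $D_0^b(Q)\subseteq[s_1,\ldots,s_n]$. For $X\in D_0^b(Q)$, the splitting of~(2) exhibits $X$ as a finite direct sum of shifted cohomologies $H^i(X)[-i]$, each summand added via a split triangle $A\to A\oplus B\to B\to A[1]$; each $H^i(X)\in(\modd\k Q)_\cR$ is in turn built from the $s_j$ by iterated extensions via the filtration in~(1), each extension yielding an exact triangle. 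The main obstacle is really the splitting in~(2); once one trusts that the obstruction lies in $\Ext^{\ge 2}_{\k Q}$ and vanishes for hereditary $\k Q$, the rest is pure bookkeeping.
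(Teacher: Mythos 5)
Your proof is correct and follows the same route as the paper's: the $\cR$-adic filtration and dimension-vector argument for~(1), the splitting of objects in $D^b(Q)$ into their cohomologies via vanishing of $\Ext^{\ge 2}$ (equivalently $\mathrm{gldim}(\k Q)=1$) for~(2), and deducing~(3) from~(1) and~(2). You simply spell out details (the truncation triangle and where the obstruction lives, the Jordan--H\"older multiplicity computation) that the paper compresses into one-line citations.
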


\begin{proof}
(1) Let $M\in (\modd \k Q)_{\cR }$, then the quotients of the (finite) filtration
$M\supset M\cdot \cR \supset M\cdot \cR ^2\supset\ldots$ are directs sums of $s_i$'s.
The isomorphism \eqref{eq_dimvect} is given by the dimension vector of $\k Q$-module.
(2) follows from $\mathrm{gldim}(\k Q)=1$.
In (3), inclusions $D_0^b(Q)\supset \langle s_1,\ldots ,s_n\rangle\supset [s_1,\ldots ,s_n]$ are obvious, while $D_0^b(Q)\subset [s_1,\ldots ,s_n]$ follows from (1) and (2).
\end{proof}

We will not need the following lemma but include it here for the interested reader.

\begin{lemma}
The natural functor
$$\Psi\colon D^b((\modd \k Q)_{\cR })\to D^b_0(Q)$$
 is an equivalence.
\end{lemma}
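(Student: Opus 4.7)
My plan is to establish essential surjectivity and full faithfulness separately.

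Essential surjectivity is immediate from Lemma~3.1(2): any $X\in D^b_0(Q)$ is isomorphic to $\bigoplus_iH^i(X)[-i]$ with each $H^i(X)\in(\modd\k Q)_{\cR}$, so $X$ lies in the image of $\Psi$ applied to the stalk complex $\bigoplus_iH^i(X)[-i]\in D^b((\modd\k Q)_{\cR})$.

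For full faithfulness I first observe that $D^b((\modd\k Q)_{\cR})$ is non-split generated by the simples $s_1,\ldots,s_n$: every object of $(\modd\k Q)_{\cR}$ has a finite filtration with simple subquotients by Lemma~3.1(1), hence lies in $[s_1,\ldots,s_n]$, and every bounded complex over an abelian category is an iterated cone on shifts of its cohomology. Combined with Lemma~3.1(3), both source and target are non-split generated by the $s_i$. A standard five-lemma d\'evissage (for each fixed $X$ the full subcategory of $Y$'s on which $\Psi$ induces an isomorphism $\Hom(X,Y[*])\xrightarrow{\sim}\Hom(\Psi X,\Psi Y[*])$ is closed under distinguished triangles, hence equals the whole source once it contains the generators; then apply the same argument in the first variable) reduces full faithfulness to bijectivity of
$$\Psi\colon \Ext^k_{(\modd\k Q)_{\cR}}(s_i,s_j)\to\Ext^k_{\Modd\k Q}(s_i,s_j)$$
for all $i,j,k$. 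For $k=0$ this is trivial, and for $k=1$ it follows from the Yoneda description together with the observation that $(\modd\k Q)_{\cR}$ is closed under extensions in $\Modd\k Q$ (an extension of a module killed by $\cR^a$ by one killed by $\cR^b$ is killed by $\cR^{a+b}$).

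For $k\geq 2$ the target vanishes by hereditariness of $\k Q$, so the remaining content is the analogous vanishing $\Ext^k_{(\modd\k Q)_{\cR}}(s_i,s_j)=0$; this is the main obstacle, since $(\modd\k Q)_{\cR}$ has neither enough projectives nor enough injectives. My preferred route is to compute $\cE=\bbR\End(T)$ for $T=s_1\oplus\cdots\oplus s_n$ inside an enhancement of $D^b(\Modd\k Q)$, observe that $\cE$ is formal because its cohomology $\Ext^\bullet_{\Modd\k Q}(T,T)$ is concentrated in degrees $0$ and~$1$ (so there are no nontrivial higher Massey products for degree reasons), and apply Proposition~2.1 to identify $D^b_0(Q)=\langle T\rangle_{D^b(\Modd\k Q)}$ with $\Perf(\cE)$. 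For the source side one argues analogously that $T$ split-generates $D^b((\modd\k Q)_{\cR})$ (which is Karoubian because its underlying abelian category is Hom-finite Krull--Schmidt), and that a natural enhancement gives $T$ the same formal dg endomorphism algebra $\cE$, whence $D^b((\modd\k Q)_{\cR})\simeq\Perf(\cE)$ by Proposition~2.1 again. Composing the two equivalences yields $\Psi$ and forces the desired $\Ext$-vanishing. The delicate step, which I expect to be the true obstacle, is justifying that $\bbR\End(T)$ computed inside a natural enhancement of $D^b((\modd\k Q)_{\cR})$ really agrees with the one computed ambiently---equivalently, showing that every Yoneda $k$-extension ($k\geq 2$) of simples within $(\modd\k Q)_{\cR}$ splits via a zigzag using the nilpotent quotient modules $\k Q/\cR^N$ for~$N$ sufficiently large.
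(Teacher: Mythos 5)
Your devissage framework, essential surjectivity argument, and the treatment of $\Ext^0$ and $\Ext^1$ all match the paper's proof and are correct. The genuine content of the lemma, however, is precisely the step you leave unresolved: showing that $\Hom^m_{D^b((\modd\k Q)_{\cR})}(s_i,s_j)=0$ for $m\ge 2$. You flag this as ``the true obstacle'' but do not actually prove it, so the proposal has a real gap.

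Moreover, the proposed detour through formality and $\Perf(\cE)$ does not circumvent that obstacle. To apply Proposition \ref{standard equiv} to $D^b((\modd\k Q)_{\cR})$ you would still need to know the cohomology of $\bbR\End(\oplus s_i)$ computed there, and that is exactly the Ext-vanishing in question: formality ``for degree reasons'' cannot kick in until you already know those degrees are $0$ and $1$. So the $\Perf$ argument is circular as written, and even if completed it would be longer than the direct devissage (you would additionally have to verify Karoubianness of the source, that $\oplus s_i$ split-generates it, and that the resulting zigzag of equivalences $\Perf(\cE')\simeq\Perf(\cE)$ coincides with $\Psi$).

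The paper closes the gap with a direct construction. A morphism $s_i\to s_j[m]$ in $D^b((\modd\k Q)_{\cR})$ is a roof $s_i\xleftarrow{q} C^\bullet\xrightarrow{p} s_j[m]$ with $q$ a quasi-isomorphism. One starts from the length-$1$ projective resolution $\bar P^\bullet=[\bar P^{-1}\hookrightarrow \bar P^0]$ of $s_i$ in $\Modd\k Q$ and a quasi-isomorphism $\bar s\colon\bar P^\bullet\to C^\bullet$; choosing $N$ with $C^k\cdot\cR^{N-1}=0$ for $k=0,-1$, one forms $P^\bullet:=\bar P^\bullet/(\bar P^0\cdot\cR^N)$, which is a two-term complex \emph{inside} $(\modd\k Q)_{\cR}$, still quasi-isomorphic to $s_i$, and through which $\bar s$ factors; a quasi-isomorphism $P^\bullet\to C^\bullet$ in the source forces $pq^{-1}=0$ whenever $m\ge 2$ because $P^\bullet$ lives in degrees $-1,0$. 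Your closing hint about zigzagging through ``nilpotent quotient modules $\k Q/\cR^N$'' is aimed at the right idea, but it is not carried out; without this construction (or an equivalent one) the proof is incomplete.
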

\begin{proof}
Note that both categories are Karoubian and generated by the object $\oplus s_i$. Moreover, $\Psi(s_i)\simeq s_i$ for any $i$. Therefore to prove that $\Psi$ is an equivalence it suffices (using the standard devissage technique) to check that~$\Psi$ induces isomorphisms
$$\Ext^m_{(\modd \k Q)_{\cR }}(s_i,s_j)=\Hom^m_{D^b((\modd \k Q)_{\cR })}(s_i,s_j)\to
\Hom^m_{D^b_0(Q)}(s_i,s_j)=
\Ext^m_{\Modd \k Q}(s_i,s_j)$$
for any $i,j$ and $m\in\bbZ$.
For $m<0$ this is clear; for  $m=0$ this holds since $(\modd \k Q)_{\cR }\subset \Modd \k Q$ is a full subcategory. For $m=1$ this holds by Yoneda's description of $\Ext^1$ groups since the subcategory $(\modd \k Q)_{\cR }\subset \Modd \k Q$ is extension-closed.
For $m\ge 2$ we have $\Ext^m_{\Modd \k Q}(s_i,s_j)=0$ since $\k Q$ is hereditary, let us check that $\Hom^m_{D^b((\modd \k Q)_{\cR })}(s_i,s_j)=0$ for $m\ge 2$.

By definition, any morphism $f\colon s_i\to s_j[m]$ in $D^b((\modd \k Q)_{\cR })$ has the form
$$s_i\xleftarrow{q} C^\bul\xrightarrow{p} s_j[m],$$
where $C^\bul$ is a bounded complex over $(\modd \k Q)_{\cR }$, $p,q$ are homomorphisms of complexes and $q$ is a quasi-isomorphism. We claim that there exists a complex $P^\bul=[P^{-1}\to P^0]$ over $(\modd \k Q)_{\cR }$ and a quasi-isomorphism $s\colon P^\bul\to C^\bul$. Then
$$f=pq^{-1}=pss^{-1}q^{-1}=0$$
since $ps=0$ (recall that $m\ge 2$).

To prove the claim, let $\bar P^\bul=[\bar P^{-1}\xra{d} \bar P^0]$ be a resolution of $s_i$ by projective finitely generated $\k Q$-modules. There exists a quasi-isomorphism $\bar s\colon \bar P^\bul\to C^\bul$.
Since $C^k$ are $\cR$-torsion modules, one can take $N$ such that $C^k\cdot \cR^{N-1}=0$ for $k=0,-1$. We let now $P^0:=\bar P^0/(\bar P^0\cdot \cR^N)$. By assumptions, $\bar s^0\colon \bar P^0\to C^0$ factors via $P^0$. Recall that $H^\bul(\bar P)\simeq s_i$, hence $d$ is injective (we will treat $\bar P^{-1}$ as a submodule in $\bar P^0$) and $$\bar P^0\cdot \cR^{N}\subset \bar P^0\cdot \cR\subset \bar P^{-1}.$$
Let $P^{-1}:=\bar P^{-1}/(\bar P^0\cdot \cR^N)$. Then $P^\bul$ is quasi-isomorphic to $\bar P^\bul$ (and to $s_i$).  Clearly $P^\bul$ is a complex over $(\modd \k Q)_\cR$. Also
$$\bar s^{-1}(\bar P^0\cdot \cR^N)\subset \bar s^{-1}(\bar P^{-1}\cdot \cR^{N-1})\subset \bar s^{-1}(\bar P^{-1})\cdot \cR^{N-1}\subset C^{-1}\cdot \cR^{N-1}=0,$$
hence $\bar s^{-1}\colon \bar P^{-1}\to C^{-1}$ factors via $P^{-1}$. Therefore, $\bar s$ factors via a quasi-isomorphism $s\colon P^\bul\to C^\bul$.
This concludes the proof of the claim and the lemma.
\end{proof}

\begin{defi} \label{def quivet like} A triangulated category $\TT$ is called {\rm quiver-like} if there exists a finite quiver~$Q$ and an equivalence of triangulated categories
$D^b_0(Q)\simeq \TT$.
\end{defi}

We obtain the immediate consequence of Definition \ref{def quivet like} and Lemma \ref{triv lemma}.

\begin{cor} \label{triv cor} Let $\TT$ be a quiver-like triangulated category with
an equivalence $\Phi \colon D^b_0(Q)\to \TT$. Put $t_i=\Phi (s_i)$. Then we have the following.

(1) For any indecomposable object $B\in \TT$ there exists a sequence of objects $B_0,\ldots ,B_m$ such that $B_m=B[d]$ for some $d\in \bbZ$,
$B_0=0$, and for each $i=1,\ldots ,m$ the object $B_i$ fits into an exact triangle
$$B_{i-1}\to B_i\to t_{j_i}\to B_{i-1}[1]$$
for some $j_i$.

(2) $K_0(\TT)\simeq\bigoplus _j\bbZ [t_j]$.

(3) $\TT=[t_1,\ldots ,t_n]$, i.e. $\TT$ is non-split generated by the $t_j$'s.
\end{cor}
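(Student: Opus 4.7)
The plan is to transfer the three statements of Lemma \ref{triv lemma} from $D^b_0(Q)$ to $\TT$ through the equivalence $\Phi$. Parts (2) and (3) should fall out formally. Since $K_0$ is an invariant of triangulated equivalences, $\Phi$ would induce $K_0(\TT) \simeq K_0(D^b_0(Q)) \simeq \bigoplus_i \mathbb Z[s_i]$, with $[s_i]$ mapping to $[t_i]$, which gives (2). Similarly, a triangulated equivalence preserves the operation of taking the smallest triangulated subcategory containing a set, so Lemma \ref{triv lemma}(3) would yield $\TT = \Phi([s_1,\ldots,s_n]) = [t_1,\ldots,t_n]$.

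For the substantive part (1), I would first move the problem to $D^b_0(Q)$ by setting $B' := \Phi^{-1}(B)$, which remains indecomposable. Lemma \ref{triv lemma}(2) gives a decomposition $B' \simeq \bigoplus_i H^i(B')[-i]$, and indecomposability of $B'$ forces exactly one cohomology module $M := H^{-d}(B')$ to be nonzero and, moreover, $M$ to be indecomposable as an object of $(\modd \k Q)_\cR$ (any nontrivial module decomposition of $M$ would pull back to a nontrivial decomposition of $M[d]\simeq B'$). The next step is to invoke Lemma \ref{triv lemma}(1) and choose a filtration
$$0 = M_0 \subset M_1 \subset \cdots \subset M_m = M$$
in $(\modd \k Q)_\cR$ with $M_i/M_{i-1} \simeq s_{j_i}$. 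Each short exact sequence produces a distinguished triangle $M_{i-1} \to M_i \to s_{j_i} \to M_{i-1}[1]$ in $D^b_0(Q)$; setting $B_i := \Phi(M_i)$ and applying the triangulated functor $\Phi$ should then yield the required triangles $B_{i-1} \to B_i \to t_{j_i} \to B_{i-1}[1]$, with $B_0 = 0$ and $B_m = \Phi(M) \simeq B[-d]$ (so the shift in the conclusion is $-d$).

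I do not expect any substantive obstacle here; the argument is essentially bookkeeping built on top of Lemma \ref{triv lemma}. The only point worth a moment of care is the reduction to a module concentrated in a single cohomological degree, which uses both parts (1) and (2) of that lemma together with the observation about indecomposability just above.
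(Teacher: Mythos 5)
Your proposal is correct and takes essentially the same route the paper intends; the paper explicitly labels the corollary an ``immediate consequence'' of Lemma~\ref{triv lemma} and the definition, leaving exactly the bookkeeping you carry out to the reader. Parts (2) and (3) follow formally from $\Phi$ being a triangulated equivalence, and for (1) the reduction via Lemma~\ref{triv lemma}(2) to a single indecomposable module $M$ in one cohomological degree, followed by transporting the radical filtration of Lemma~\ref{triv lemma}(1) through $\Phi$, is precisely the argument; the only inessential extra is your remark that $M$ is indecomposable, which is true but not needed since the filtration exists for any module in $(\modd \k Q)_\cR$.
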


\begin{lemma} \label{lemma on formality}
Let $\cA$ be an abelian category in which for every object $A$ there exists an injective resolution
$$0\to A\to I^0\to I^1\to 0.$$
Let $A_1,\ldots,A_n$ be objects in $\cA$ such that  $\Hom (A_i,A_j)=\delta_{ij}\cdot \k$   and $\Ext ^s(A_i,A_j)=0$ for all $i,j$ and $s\ne 0,1$. Then the dg algebra $\bbR\End (\oplus_{i=1}^n A_i)$ is formal.

The same holds for projective resolutions.
\end{lemma}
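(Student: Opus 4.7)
\emph{Proof plan.} Let $A = \bigoplus_{i=1}^n A_i$. Choose injective resolutions $0 \to A_i \to I_i^0 \to I_i^1 \to 0$ and set $I^\bullet := \bigoplus_i I_i^\bullet$, an injective resolution of $A$ of length one. We compute $B := \bbR\End(A)$ as the endomorphism dg algebra $\End^\bullet(I^\bullet)$ in the standard dg enhancement of $D^b(\cA)$ by complexes of injectives. Since $I^\bullet$ is concentrated in degrees $0$ and $1$, the dg algebra $B$ is concentrated in degrees $-1, 0, 1$; in particular $B^1 = Z^1(B)$. By hypothesis, $H^0(B) = \End(A) = \bigoplus_{i=1}^n \k e_i$ with $e_i$ the projection to $A_i$, and $H^\bullet(B)$ is supported only in degrees $0$ and $1$. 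My strategy is to exhibit a sub-dg-algebra $C \subset B$ with zero differential whose inclusion is a quasi-isomorphism; this immediately gives $B \simeq C \simeq H^\bullet(B)$ and proves formality.

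The first step is to lift the idempotents $e_i$. Because $I^\bullet$ is a direct sum indexed by $i$, the projections $\tilde e_i\colon I^\bullet \to I^\bullet$ onto the summand $I_i^\bullet$ are chain maps, and they form a system of pairwise orthogonal idempotents in $Z^0(B)$ lifting $e_1, \ldots, e_n$. Set $R_0 := \bigoplus_i \k \tilde e_i \subset Z^0(B)$; the composite $R_0 \hookrightarrow Z^0(B) \twoheadrightarrow H^0(B)$ is an isomorphism of algebras.

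The second step uses that $B^1 = Z^1(B)$ is an $R_0$-bimodule via multiplication in $B$. A short computation using $d\tilde e_i = 0$ shows that $d(B^0) \subset B^1$ is an $R_0$-sub-bimodule, with $B^1/d(B^0) = H^1(B)$ as bimodules. Since $R_0 \cong \k^n$ is separable, $R_0$-bimodules form a semisimple category and the extension
$$0 \to d(B^0) \to B^1 \to H^1(B) \to 0$$
splits as bimodules; fix a bimodule section and let $M \subset B^1$ be its image. Define $C := R_0 \oplus M$, concentrated in degrees $0$ and $1$. Then $R_0 \cdot R_0 \subset R_0$, the products $R_0 \cdot M$ and $M \cdot R_0$ lie in $M$ by the bimodule property, and $M \cdot M \subset B^2 = 0$, so $C$ is closed under multiplication. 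Since $R_0 \subset Z^0(B)$ and $M \subset Z^1(B) = B^1$, the differential of $B$ vanishes on $C$. Hence $C$ is a sub-dg-algebra of $B$ with zero differential, isomorphic as a graded algebra to $H^\bullet(B) = \Ext^\bullet(A,A)$, and the inclusion $C \hookrightarrow B$ is a quasi-isomorphism by construction.

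The main technical point is the bimodule splitting of $B^1 \twoheadrightarrow H^1(B)$: this requires both that $H^0(B)$ be semisimple (which the hypothesis $\Hom(A_i, A_j) = \delta_{ij}\k$ supplies) and that the resolution have length one (so that $B^1 = Z^1(B)$ and $M \cdot M \subset B^2 = 0$ automatically). The projective resolution case is entirely symmetric, replacing $I^\bullet$ by a projective resolution $0 \to P^{-1} \to P^0 \to A \to 0$ and running the same argument with degrees flipped.
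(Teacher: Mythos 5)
Your proof is correct and is essentially the paper's own argument, just phrased a bit more abstractly. The paper lifts the idempotents $e_i$ to the projections onto $I_i^\bullet$, uses the Peirce decomposition $\cE^1 = \bigoplus_{i,j} e_i \cE^1 e_j$, and picks a complement $V^1_{i,j}$ to $d(e_i\cE^0 e_j)$ inside each block; you instead invoke separability of $R_0 \cong \k^n$ to get a bimodule splitting of $0 \to d(B^0) \to B^1 \to H^1(B) \to 0$. These are literally the same choice (a bimodule complement is exactly a compatible family of complements in the Peirce blocks), and the resulting sub-dg-algebra with zero differential is identical in both proofs.
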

\begin{proof}
Choose injective resolutions $A_j\to I_j^\bullet$ of length $1$. Then the dg algebra $\bbR \End (\oplus_j A_j)$ is quasi-isomorphic to
$$\cE  :=\End (\oplus_j I_j^\bullet)=\cE  ^{-1}\oplus \cE  ^0\oplus \cE  ^1.$$
Let $e_j\in \cE  ^0$ be the idempotent of the summand $I_j^\bullet$.
Then $\cE  ^0=\oplus_{i,j}e_i\cE  ^0 e_j$, $\cE  ^1=\oplus_{i,j}e_i\cE_1e_j$
and $d$ sends $e_i\cE_0e_j$ to $e_i\cE_1e_j$.
Choose a subspace $V=V^0\oplus V^1$ of $\cE $, where $V^0=\oplus_j \k e_j\subset \cE  ^0$,   $V^1_{i,j}\subset e_i\cE^1e_j$ is any complement of $d(e_i\cE  ^0e_j)\subset e_i\cE  ^1e_j$,
and $V^1=\oplus_{i,j} V^1_{i,j}\subset \cE  ^1$.
Then $V$ is a dg subalgebra of $\cE  $ and the inclusion $V\subset \cE $ is a quasi-isomorphism.

In case of projective resolutions the proof is similar.
\end{proof}

\begin{cor} \label{cor formal}Let $Q$ be a quiver and consider the category $D^b_0(Q)$ with the standard enhancement coming from the embedding $D^b_0(Q)\subset D^b(Q)$. Then the dg algebra
$$\bbR \End (\oplus s_i)$$
is formal. Hence there is an equivalence of triangulated categories
$$D^b_0(Q)\simeq \Perf (\Ext ^\bullet (\oplus s_i,\oplus s_i))$$
where the graded algebra $\Ext ^\bullet (\oplus s_i,\oplus s_i)$ is considered as a dg algebra with zero differential.
\end{cor}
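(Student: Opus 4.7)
The plan is to apply Lemma~\ref{lemma on formality} to the simple modules $s_1,\dots,s_n$ inside $\Modd\k Q$, and then combine the resulting formality of $\bbR\End(\oplus_i s_i)$ with Proposition~\ref{standard equiv} to identify $D^b_0(Q)$ with the perfect derived category of the $\Ext$-algebra.

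First I would verify the hypotheses of Lemma~\ref{lemma on formality} in its projective-resolution form. Since $\k Q$ is hereditary, every $\k Q$-module admits a projective resolution of length at most one, which is exactly the hypothesis the lemma requires. The vertex simples $s_i$ are pairwise non-isomorphic one-dimensional modules, so $\Hom(s_i,s_j)=\delta_{ij}\cdot\k$; and the hereditariness of $\k Q$ gives $\Ext^s(s_i,s_j)=0$ for $s\ge 2$, while negative Exts vanish trivially. The lemma then yields formality of the dg endomorphism algebra of $\oplus_i s_i$ computed in the standard dg enhancement inherited from $D^b(Q)$.

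Next, to promote formality to the asserted equivalence, I would apply Proposition~\ref{standard equiv} to the Karoubian category $D^b_0(Q)$ with the inherited enhancement. The split-generation hypothesis $D^b_0(Q)=\langle\oplus_i s_i\rangle$ follows at once from Lemma~\ref{triv lemma}(3): the non-split generation $D^b_0(Q)=[s_1,\dots,s_n]$ trivially implies $D^b_0(Q)=\langle\oplus_i s_i\rangle$. Proposition~\ref{standard equiv} then gives $D^b_0(Q)\simeq \Perf(\bbR\End(\oplus_i s_i))$, and combining with formality (and the quasi-isomorphism invariance of $\Perf$) delivers the desired equivalence $D^b_0(Q)\simeq \Perf(\Ext^\bullet(\oplus_i s_i,\oplus_i s_i))$ with the $\Ext$-algebra regarded as a dg algebra with zero differential.

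The main (and essentially only) obstacle is ensuring that the dg algebra $\bbR\End(\oplus_i s_i)$ appearing in the standard enhancement of $D^b_0(Q)$ can legitimately be computed using the length-one projective resolutions in $\Modd\k Q$ that are used in the proof of Lemma~\ref{lemma on formality}. This is standard, but a choice of dg model has to be pinned down so that the two constructions agree up to quasi-isomorphism; once this compatibility is in place, the rest is a direct assembly of the lemma and the proposition.
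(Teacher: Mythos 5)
Your proof is correct and follows essentially the same route as the paper: apply Lemma~\ref{lemma on formality} to the hereditary abelian category $\Modd\k Q$ with $A_i=s_i$ to get formality of $\bbR\End(\oplus_i s_i)$, then use Proposition~\ref{standard equiv} (with Karoubianness and generation from Lemma~\ref{triv lemma}) together with quasi-isomorphism invariance of $\Perf$. The paper is agnostic about whether to use the injective or projective form of Lemma~\ref{lemma on formality}; you chose projective, which is equally valid since $\k Q$ is hereditary on both sides.
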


\begin{proof} The formality of the dg algebra $\bbR \End (\oplus s_i)$ follows from Lemma \ref{lemma on formality} applied to the abelian category $\cA$ of all right $\k Q $-modules and taking $A_i=s_i$.

Because the category $D^b_0(Q)$ is Karoubian by Proposition \ref{standard equiv} we get the equivalence
$$D^b_0(Q)\simeq \Perf (\bbR \End (\oplus s_i)).$$
The last assertion then follows from the fact that the dg algebras $\Ext ^\bullet (\oplus s_i,\oplus s_i)$ and
$\bbR \End (\oplus s_i)$ are quasi-isomorphic.
\end{proof}

\begin{defi}\label{defi vertex like} Let $\TT$ be a triangulated category with an enhancement. A collection of objects $\{t_1,\ldots ,t_n\}$ is $\TT$ is called {\rm vertex-like} if the endomorphism dg algebra
$$\bbR \End (\oplus t_i)$$
is formal, and in addition $\Hom (t_i,t_j)=\delta _{ij}\cdot \k$, and $\Hom ^p(t_i,t_j)=0$ for all $i,j$ and $p\neq 0,1$.
\end{defi}

\begin{remark} It follows from Corollary \ref{cor formal} that the collection of objects $\{ s_1,\ldots ,s_n\}$ in the category $D^b_0(Q)$ is vertex-like. Note that the dimension of the space $\Ext ^1(s_i,s_j)$ is equal to the number of arrows from $v_j$ to $v_i$.
\end{remark}

The next proposition gives a necessary and sufficient condition for a category to be quiver-like.

\begin{prop} \label{equiv cond} Let $\TT$ be an $\Ext$-finite triangulated category. The following conditions are equivalent.

(1) $\TT$ is quiver-like.

(2) $\TT\simeq \Perf(E)$ where $E=E^0\oplus E^1$ is a dg algebra with zero differential and such that $E^0=\k \times \ldots \times \k$.

(3) $\TT$ is Karoubian, it has an enhancement and it is generated by a collection of objects $\{t_1,\ldots ,t_n\}$ that is vertex-like.

Moreover, if $\TT$ satisfies (3), then there exists a quiver $Q$ and an equivalence $\Phi \colon D^b_0(Q)\to \TT$ such that $\Phi (s_i)=t_i$.
\end{prop}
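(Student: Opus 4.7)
The plan is to prove the cycle $(1) \Rightarrow (3) \Rightarrow (2) \Rightarrow (1)$ and extract the moreover clause from the constructive direction. The first implication $(1) \Rightarrow (3)$ is almost immediate from what is already in place. Given an equivalence $\Phi \colon D^b_0(Q) \to \TT$, I transport Karoubianness and the standard enhancement of $D^b_0(Q)$ to $\TT$; by Corollary \ref{cor formal} and the remark following it, the images $t_i := \Phi(s_i)$ form a vertex-like collection, and by Lemma \ref{triv lemma}(3) they generate $\TT$.

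For $(3) \Rightarrow (2)$, I set $X := \bigoplus_i t_i$, which still split-generates $\TT$ by Karoubianness. Proposition \ref{standard equiv} yields $\TT \simeq \Perf(\bbR\End(X))$. The vertex-like hypothesis says $\bbR\End(X)$ is formal, hence quasi-isomorphic to the graded algebra $E := \Ext^\bullet(X,X)$ with zero differential, and the other two conditions in Definition \ref{defi vertex like} translate directly to $E^0 = \k \times \cdots \times \k$ (with primitive idempotents $e_i$ corresponding to the $t_i$) and $E = E^0 \oplus E^1$. Since quasi-isomorphic dg algebras have equivalent perfect derived categories, this gives $\TT \simeq \Perf(E)$.

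For $(2) \Rightarrow (1)$, the core observation is that any graded algebra concentrated in degrees $0$ and $1$ automatically satisfies $E^1 \cdot E^1 \subset E^2 = 0$, so its entire algebra structure is determined by the $E^0$-bimodule structure on $E^1$, which is in turn recorded by the integers $d_{ij} := \dim_\k e_i E^1 e_j$ (finite by $\Ext$-finiteness of $\TT$). I define $Q$ to be the quiver on vertices $v_1, \ldots, v_n$ with exactly $d_{ij}$ arrows from $v_j$ to $v_i$. By the remark after Corollary \ref{cor formal}, the graded algebra $\Ext^\bullet_{D^b_0(Q)}(\bigoplus s_i, \bigoplus s_i)$ has the same invariants, hence is isomorphic to $E$ once corresponding primitive idempotents are matched. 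Composing this isomorphism with Corollary \ref{cor formal} produces $\TT \simeq \Perf(E) \simeq \Perf(\Ext^\bullet(\bigoplus s_i,\bigoplus s_i)) \simeq D^b_0(Q)$.

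The moreover statement I obtain by tracking generators through the composite equivalence: under Proposition \ref{standard equiv}, $t_i$ corresponds to the summand $e_i E$ of the free module $E \in \Perf(E)$, while under Corollary \ref{cor formal} the analogous construction for $D^b_0(Q)$ sends $s_i$ to the summand cut out by the idempotent associated with the $i$-th vertex. The only genuinely delicate step is aligning these two systems of primitive idempotents in $\k^n$; since primitive idempotents of a product of fields are unique up to permutation, this can be arranged by relabelling the vertices of $Q$, and this bookkeeping is, in my view, the main point of care in the argument.
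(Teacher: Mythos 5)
Your proof is correct and uses the same ingredients as the paper's (formality via Lemma~\ref{lemma on formality}/Corollary~\ref{cor formal}, the equivalence $\TT\simeq\Perf(\bbR\End(X))$ from Proposition~\ref{standard equiv}, and the observation that a graded algebra concentrated in degrees $0,1$ with $E^0=\k^n$ is determined by the dimensions $\dim e_iE^1e_j$ and hence realized by a quiver). The only difference is cosmetic: you traverse the cycle as $(1)\Rightarrow(3)\Rightarrow(2)\Rightarrow(1)$ while the paper does $(1)\Rightarrow(2)\Rightarrow(3)\Rightarrow(1)$, and you should double-check that your convention for $e_iE^1e_j$ is the one that matches the remark's statement that the number of arrows $v_j\to v_i$ equals $\dim\Ext^1(s_i,s_j)$ — if not, you get the opposite quiver, which is still a valid realization but would require adjusting the index bookkeeping.
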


\begin{proof} (1)$\Rightarrow$(2) is contained in Corollary \ref{cor formal}.

(2)$\Rightarrow$(3). Since $\TT\simeq \Perf(E)$, it is Karoubian and has an enhancement. Let $e_1,\ldots ,e_n\in E$ be the idempotents corresponding to the factors in $E^0=\k \times \ldots \times \k$. Then the right dg $E$-modules $e_iE$ are h-projective, they generate $\Perf (E)$, and the dg algebra
$$\bbR \End (\bigoplus _ie_iE)=\bbR \End (E)=E$$
is formal.
In addition $\Hom (e_iE,e_jE)=\delta _{ij}\cdot \k$, $\Hom ^p(e_iE,e_jE)=0$ for all $i,j$ and $p\neq 0,1$. So we can take $t_i=e_iE$.

(3)$\Rightarrow$(1). Consider the graded algebra $\cE =\Ext ^\bullet (\oplus t_i,\oplus t_i)$ as a dg algebra with zero differential. Our assumptions imply that the category $\TT$ is equivalent to the category $\Perf (\cE)$.

Now define the quiver $Q$ with vertices $v_1,\ldots ,v_n$
and the number of arrows from $v_i$ to $v_j$ equal to $\dim \Hom (t_j,t_i[1])$. Let $s_1,\ldots ,s_n\in D_0^b(Q)$ be the corresponding simple modules.
By construction, we have $\Ext^\bullet(t_i,t_j)\simeq\Ext^\bullet(s_i,s_j)$ for all $i,j$.
By Corollary \ref{cor formal} we get
$$D_0^b(Q)\simeq \Perf(\Ext^\bullet(\oplus s_i,\oplus s_i))\simeq \Perf (\cE)\simeq \TT,$$
i.e. $\TT$ is quiver-like. This proves the implication (3)$\Rightarrow$(1) and also the last assertion of the proposition.
\end{proof}

\begin{prop}
\label{prop_niceexist}
Assume that the field $\k$ is algebraically closed. Let $\AA$ be an abelian ($\k$-linear) category such that any object  $A\in \AA$ has an injective resolution of length $\le 1$. Assume that the category $D^b(\AA)$ is Karoubian. Let $\TT\subset D^b(\AA)$ be a finitely generated $\Ext$-finite thick subcategory. Assume there exists a linear function
$$r\colon K_0(\TT)\to \Z,$$
such that for any nonzero $F\in \TT\cap\AA$ one has $r([F])>0$. Then the category $\TT$ satisfies the condition (3) in Proposition \ref{equiv cond}, and hence it is quiver-like.

The same holds for  projective resolutions.
\end{prop}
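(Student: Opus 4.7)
The plan is to apply Proposition \ref{equiv cond}(3) by producing a finite vertex-like generating set for $\TT$. The function $r$ is additive on short exact sequences $0\to F'\to F\to F''\to 0$ in $\AA$ whose three terms all lie in $\TT$, and is strictly positive on nonzero objects of $\TT\cap\AA$; it therefore functions as a length on $\TT\cap\AA$ and forces every descending chain of subobjects of a given $F\in\TT\cap\AA$ that remains inside $\TT$ to terminate.

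Since $\AA$ has global dimension at most $1$, every object of $D^b(\AA)$ splits as the direct sum of shifts of its cohomology; combined with thickness of $\TT$, this shows that the cohomology of any generator of $\TT$ already lies in $\TT\cap\AA$, so $\TT$ is generated by finitely many objects $G_1,\ldots,G_k\in\TT\cap\AA$. Call $t\in\TT\cap\AA$ \emph{$\TT$-simple} if it is nonzero and the only subobjects $t'\subset t$ in $\AA$ with $t'\in\TT$ (equivalently $t/t'\in\TT$, using the triangle $t'\to t\to t/t'$) are $0$ and $t$. Induction on $r([\cdot])$ shows that every nonzero $F\in\TT\cap\AA$ admits a finite filtration in $\AA$ whose successive terms all lie in $\TT$ and whose subquotients are $\TT$-simple. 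Applied to $G_1,\ldots,G_k$ and followed by taking one representative of each isomorphism class of the $\TT$-simple subquotients that appear, this produces a finite list $t_1,\ldots,t_n$ of pairwise non-isomorphic $\TT$-simples with $\TT=\langle t_1,\ldots,t_n\rangle$.

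It remains to verify that $\{t_1,\ldots,t_n\}$ is vertex-like. The hypothesis on $\AA$ gives $\Hom^p(t_i,t_j)=0$ for $p\notin\{0,1\}$ immediately. For any morphism $f\colon t_i\to t_j$ in $\AA$, the cone in $D^b(\AA)$ is quasi-isomorphic to $\ker(f)[1]\oplus\coker(f)$ because $\mathrm{gldim}\,\AA\le 1$; since this cone belongs to $\TT$ and $\TT$ is thick, both $\ker(f)$ and $\coker(f)$—and hence also $\im(f)$—lie in $\TT$. If $f\ne 0$, the $\TT$-simplicity of $t_i$ and $t_j$ forces $\ker(f)=0$ and $\im(f)=t_j$, so $f$ is an isomorphism. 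Consequently $\Hom(t_i,t_j)=0$ whenever $i\ne j$, and $\End(t_i)$ is a finite-dimensional division algebra over the algebraically closed field $\k$, hence equal to $\k$. Lemma \ref{lemma on formality}, applied in $\AA$ using the assumed length-at-most-one injective resolutions, yields formality of $\bbR\End(\bigoplus t_i)$. With the enhancement $\TT$ inherits from $D^b(\AA)$, Proposition \ref{equiv cond}(3) now gives that $\TT$ is quiver-like.

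The main obstacle is extracting this finite vertex-like generating set in the first place. The length function $r$ supplies the descending chain condition, but the hypothesis $\mathrm{gldim}\,\AA\le 1$ is used twice in an essential way: it reduces the problem to generators sitting inside $\AA$, and it converts the thickness of $\TT$ in $D^b(\AA)$ into the concrete statement that kernels, images, and cokernels in $\AA$ of morphisms between objects of $\TT\cap\AA$ still belong to $\TT$—exactly the input needed to run the simple-object analysis for the $t_i$.
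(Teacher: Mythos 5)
Your proof is correct, and it does take a genuinely different route from the paper's, though both hinge on the same two inputs (heredity of $\AA$ and positivity of $r$). The paper never introduces the notion of a $\TT$-simple object: instead it selects a generating family $A_1,\ldots,A_n\in\TT\cap\AA$ that first minimizes $\sum_i r([A_i])$ and then, among those, maximizes $n$, and shows this extremal family is vertex-like by a replacement argument --- given a morphism $f\colon A_i\to A_j$ with kernel $K$, image $I$, cokernel $C$, swapping $\{A_i,A_j\}$ for $\{K,I,C\}$ would either lower $\sum r$ or raise $n$, contradicting extremality unless $f=0$ or $f$ is an isomorphism. You instead define $\TT$-simple objects directly, use the descending chain condition supplied by $r$ to give every $F\in\TT\cap\AA$ a finite filtration in $\TT$ with $\TT$-simple subquotients (a Jordan--H\"older-style devissage), and take the finitely many $\TT$-simples appearing in filtrations of a finite generating set; the vertex-like verification is then formally identical (cone analysis, $\End$ a finite-dimensional division algebra over $\bar\k$). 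Your approach is a bit more structural --- it makes the vertex set canonical a priori, being precisely the $\TT$-simples occurring --- while the paper's is more economical, avoiding the filtration-existence lemma by folding it into the extremality of the chosen family. Both are sound; the only point worth flagging in yours is that one should note explicitly that $\langle t_1,\ldots,t_n\rangle\subset\TT$ because each subquotient $F_i/F_{i-1}$ of a filtration in $\TT$ is again in $\TT$ via the triangle $F_{i-1}\to F_i\to F_i/F_{i-1}$, which you use implicitly.
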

\begin{proof}
Note that $\AA$ is hereditary and thus any object in $\TT$ is a direct sum of its cohomology. It follows that one can choose a finite set of generators in $\TT$ belonging to $\AA\subset D^b(\AA)$.
Take any family $A_1,\ldots,A_n\in \AA$ of nonzero objects generating $\TT$
such that
\begin{enumerate}
\item $\sum_i r([A_i])$ is the minimal possible;
\item the number $n$ is the maximal possible among all families with the fixed $\sum_i r([A_i])$.
\end{enumerate}
(such a family exists because $r([A])>0$ for any nonzero $A\in\TT\cap \AA$). Note that $A_i\ncong A_j$ for $i\neq j$.
We claim that the family $\{ A_1,\ldots,A_n\}$ is vertex-like.

First we check that there are no morphisms between $A_i$'s except for scalar multiplication. Let
$f\colon A_i\to A_j$ be a morphism. Denote $K:=\ker f,I:=\im f, C:=\coker f$. Since $\AA$ is hereditary, the complex $Cone (f)$ in $\TT$ is quasi-isomorphic to $K[1]\oplus C$. Since $\TT$ is thick we get that $K,C\in \TT$. Also we get $I\in \TT$ and
$$\langle A_i,A_j\rangle=\langle K,I,C\rangle.$$
If $i\ne j$ we have
$$r([A_i])+r([A_j])=r([K])+r([I])+r([I])+r([C])=(r([K])+r([I])+r([C]))+r([I]).$$
Replacing $A_i,A_j$ with $K,I,C$ we get a generating family with the smaller $\sum_i r([A_i])$ unless $I=0$, it contradicts to condition (1). Hence $f=0$.
If $i=j$ we get
$$r([A_i])=r([K])+r([I])=r([I])+r([C]).$$
Replacing $A_i$ with $K,I$ we get a generating family with the same $\sum_i r([A_i])$ and with the bigger number of objects unless $I=0$ or $K=0$, it contradicts to condition (2). Hence $f=0$ or $K=0$. Similarly, replacing $A_i$ with $I,C$ we see that $I=0$ or $C=0$.
Thus, if $f\ne 0$ then $K=C=0$ and $f$ is an isomorphism. We proved that for each $i$ the endomorphism algebra $\End A_i$ is a finite-dimensional division $\k$-algebra. Since $\k$ is algebraically closed, $\End A_i=\k$.

Clearly, $\Hom^s(A_i,A_j)=0$ for $s\ne 0,1$. Finally, the dg algebra $\R\End(\oplus_i A_i)$ is formal by Lemma~\ref{lemma on formality}. Therefore $\{ A_1,\ldots,A_n\}$ is a vertex-like collection. Also $\TT$ is Karoubian, hence it satisfies the condition (3) of Proposition \ref{equiv cond}.
\end{proof}



\begin{cor}\label{first cor}
Let $A$ be a hereditary $\k$-algebra over an algebraically closed field. Let $\TT=\langle M_1,\ldots,M_n\rangle\subset D^b(\Modd A)$ be any thick subcategory generated by finite-dimensional (over $\k$) modules. Then $\TT$ is quiver-like.
\end{cor}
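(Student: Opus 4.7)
The plan is to apply Proposition \ref{prop_niceexist} directly to the abelian category $\AA = \Modd A$. Since $A$ is hereditary, every object of $\AA$ admits a projective resolution of length at most $1$, and $\AA$ has countable direct sums, so $D(\AA)$ is Karoubian by the remark in Section 2 and $D^b(\AA) \subset D(\AA)$ inherits this property as a thick subcategory. This secures the ambient hypotheses of the proposition on $\AA$ and $D^b(\AA)$; what remains is to verify $\Ext$-finiteness of $\TT$ and to construct a linear function on $K_0(\TT)$ that is strictly positive on nonzero objects of $\TT \cap \AA$.

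The first step is to show that every object of $\TT$ has cohomology of finite total dimension over $\k$. The full subcategory of $D^b(\Modd A)$ consisting of complexes with finite-dimensional total cohomology is triangulated (by the long exact sequence of cohomology) and closed under direct summands, so it is thick; since it contains each generator $M_i$, it contains all of $\TT$. Combined with heredity (which forces any object of $\TT$ to split as a direct sum of its shifted cohomology) and the standard finite-dimensionality of $\Hom$ and $\Ext^1$ between finite-dimensional modules over a hereditary $\k$-algebra, this yields the $\Ext$-finiteness of $\TT$.

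The second step is to take
$$r([F^\bullet]) = \sum_{i \in \Z} (-1)^i \dim_\k H^i(F^\bullet),$$
which is a well-defined integer by the finiteness established above and which is additive on distinguished triangles via the long exact cohomology sequence; hence $r$ descends to a linear map $K_0(\TT) \to \Z$. For any nonzero $F \in \TT \cap \AA$, viewed as a complex concentrated in degree $0$, one has $r([F]) = \dim_\k F > 0$. Proposition \ref{prop_niceexist} now applies and produces a vertex-like family of generators of $\TT$, and Proposition \ref{equiv cond} then furnishes a quiver $Q$ together with an equivalence $D^b_0(Q) \xrightarrow{\sim} \TT$.

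The main obstacle is essentially the verification that objects of $\TT$ have finite-dimensional cohomology over $\k$, as this statement underlies both $\Ext$-finiteness and the well-definedness of $r$. Everything else is either immediate from the definitions or a direct consequence of the heredity of $A$ together with the machinery assembled earlier in the paper.
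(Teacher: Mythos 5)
Your approach --- applying Proposition~\ref{prop_niceexist} to $\AA = \Modd A$ with $r$ given by the Euler characteristic of $\k$-dimensions of cohomology --- is exactly what the paper does, and you usefully spell out how $r$ is well-defined and additive on triangles, which the paper leaves implicit.

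There is, however, a genuine gap in your verification of $\Ext$-finiteness. The claim that $\Hom$ \emph{and} $\Ext^1$ between finite-dimensional modules over a hereditary $\k$-algebra are ``standardly'' finite-dimensional is only half true: $\Hom_A(M,N)\subset\Hom_\k(M,N)$ is finite-dimensional, but $\Ext^1_A(M,N)$ need not be. Take $A=\k\langle x_i : i\in\bbN\rangle$, the free associative algebra on countably many generators (a hereditary ring), and let $S=\k$ be the module with every $x_i$ acting by zero. The projective resolution $0\to\bigoplus_{i\in\bbN}A\to A\to S\to 0$ (with the left map sending the $i$-th basis element to $x_i$) gives $\Ext^1_A(S,S)\cong\prod_{i\in\bbN}\k$, which is infinite-dimensional. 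For this $A$ and $\TT=\langle S\rangle$, the subcategory $\TT$ is not $\Ext$-finite and hence not quiver-like (any $D^b_0(Q)$ for a finite quiver $Q$ is $\Ext$-finite), so the corollary as literally stated fails. The moral is that $\Ext$-finiteness of $\TT$ is a genuine hypothesis of Proposition~\ref{prop_niceexist} that has to be either assumed or verified in the case at hand. The paper tacitly only uses the corollary with $A=\k Q$ for a finite quiver $Q$, where one \emph{can} check finite-dimensionality of $\Ext^1$ between finite-dimensional modules using the standard length-one projective resolution of a finite-dimensional representation. If you add that hypothesis (or restrict to that class of $A$), the rest of your argument --- Karoubianness of $D^b(\Modd A)$, finite-dimensionality of total cohomology of objects of $\TT$, additivity and strict positivity of $r$ --- is correct and reproduces the paper's proof.
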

\begin{proof}
We use Proposition~\ref{prop_niceexist}. Namely, we take $\AA=\Modd A$ and $r$ to be the function induced by the dimension of a module over $\k$.
\end{proof}

\begin{cor}
Let $\TT$ be a quiver-like triangulated category. Then any finitely generated thick subcategory $\TT '\subset \TT$ is also quiver-like.
\end{cor}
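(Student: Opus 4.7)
The plan is to reduce the statement to Corollary \ref{first cor}, which handles the concrete situation of a thick subcategory of $D^b(\Modd A)$, for a hereditary $A$, that is generated by finite-dimensional modules. Since $\TT$ is quiver-like, fix an equivalence $\Phi\colon D^b_0(Q)\isomoto\TT$ for some finite quiver $Q$ and set $\cU:=\Phi^{-1}(\TT')\subset D^b_0(Q)$; because being quiver-like is preserved by equivalences, it suffices to prove that $\cU$ is quiver-like.

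Next I would choose finitely many generators $Y_1,\ldots,Y_m$ of $\cU$. By Lemma \ref{triv lemma}(2) each $Y_i$ splits as a finite direct sum of shifts of its cohomology modules $H^n(Y_i)\in(\modd\k Q)_\cR$, and these cohomology modules are automatically finite-dimensional over $\k$. Replacing the $Y_i$ by the resulting finite collection gives a generating set for $\cU$ consisting entirely of finite-dimensional $\k Q$-modules.

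The last step is to view $\cU$ as a thick subcategory of the larger ambient category $D^b(\Modd\k Q)$. For this one checks that $D^b_0(Q)$ is itself thick in $D^b(\Modd\k Q)$: it is a full triangulated subcategory since $(\modd\k Q)_\cR$ is extension-closed in $\Modd\k Q$, and it is closed under direct summands since a summand of a finite-dimensional $\cR$-torsion module is of the same type. Consequently the thick closure of our finite-dimensional generators inside $D^b_0(Q)$ agrees with the one computed inside $D^b(\Modd\k Q)$, and Corollary \ref{first cor}, applied to the hereditary algebra $\k Q$, finishes the argument. The only slightly non-formal point is this compatibility of thick closures; everything else is a direct invocation of prior results, so I do not anticipate any serious obstacle.
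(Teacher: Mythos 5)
Your proposal is correct and follows essentially the same route as the paper: reduce to $\TT = D^b_0(Q)$ and apply Corollary \ref{first cor} with $A = \k Q$. The paper's proof is a one-liner, while you spell out the (genuinely needed) intermediate steps — decomposing generators into their finite-dimensional cohomology modules and observing that $D^b_0(Q)$ is thick in $D^b(\Modd \k Q)$ so that the thick closure can be computed in either ambient category — but these are exactly the implicit details behind the paper's terse statement, not a different strategy.
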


\begin{proof} We may assume that $\TT=D^b_0(Q)$ for a quiver $Q$ and use Corollary \ref{first cor} with $A=\k Q$.
\end{proof}

\medskip

\begin{prop}
\label{prop_Qtree}
Let $Q$ be a quiver with two vertices $1,2$ such that for any $i, j\in\{1,2\}$ there is at least one arrow from $i$ to $j$. Then the category $D^b_0(Q)$ has an infinite descending binary tree of thick subcategories. Moreover one can find such a tree with the following additional property: if $\TT _1$ and $\TT _2$ are two elements of this tree which are not located one above the other, then
$\TT _1\cap \TT _2=0$.
\end{prop}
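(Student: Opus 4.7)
The plan is to build the tree by induction on depth, maintaining at each node $\TT_\sigma$ a vertex-like generating pair $(t_1^\sigma,t_2^\sigma)$ with $\Ext^1(t_i^\sigma,t_j^\sigma)\ne 0$ for all $i,j\in\{1,2\}$. By Proposition~\ref{equiv cond} this guarantees that $\TT_\sigma$ is equivalent to $D^b_0(Q_\sigma)$ for some 2-vertex quiver $Q_\sigma$ still satisfying the hypothesis of the proposition, so the induction can be continued. The whole argument then reduces to a single inductive step: from such a pair $(t_1,t_2)$ produce two further vertex-like pairs of the same type inside $\langle t_1,t_2\rangle$ whose thick closures are strictly smaller and intersect trivially. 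Sibling disjointness then upgrades automatically to pairwise disjointness of all incomparable nodes by nesting, since two incomparable nodes are both contained in disjoint siblings at their lowest common ancestor.

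For the inductive step, assuming first that $\dim\Ext^1(t_2,t_1)\ge 2$, I would pick linearly independent classes $\xi_1,\xi_2\in\Ext^1(t_2,t_1)$ and form extensions $u_i$ via triangles
\[
t_1\to u_i\to t_2\xrightarrow{\xi_i} t_1[1],\qquad i=1,2.
\]
Long-exact-sequence computations using the vertex-like property of $(t_1,t_2)$ then yield $\End(u_i)=\k$, and, crucially, $\Hom(u_1,u_2)=\Hom(u_2,u_1)=0$: the cancellation of the connecting map relies precisely on the linear independence of $\xi_1,\xi_2$. Hereditarity of the ambient abelian category forces $\Ext^{\ge 2}=0$, and Lemma~\ref{lemma on formality} then delivers formality of $\bbR \End(u_1\oplus u_2)$, so $(u_1,u_2)$ is again vertex-like. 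A parallel count shows that all new $\Ext^1$-multiplicities are at least $2$, so the arrow condition is preserved and the construction can be iterated. A symmetric construction in a different direction (possible without any issue once the first iteration has doubled the multiplicities everywhere) produces the other child pair $(v_1,v_2)$.

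Strict containment of each child in $\langle t_1,t_2\rangle$ follows from $K_0$: since $[u_i]=[v_i]=[t_1]+[t_2]$, both children have $K_0$-image of rank one in $K_0(\langle t_1,t_2\rangle)\cong\bbZ^2$. The disjointness $\langle u_1,u_2\rangle\cap\langle v_1,v_2\rangle=0$ does not follow from $K_0$ alone, since both images lie in the same rank-one sublattice $\bbZ([t_1]+[t_2])$; I would instead pass via hereditarity to the abelian category $\AA=(\modd\k Q_\sigma)_\cR$ and observe that a non-zero module presented simultaneously as an iterated extension of the $u_i$'s (producing a Loewy filtration with $t_1$'s at the bottom and $t_2$'s at the top) and of the $v_i$'s (with the opposite pattern) would have incompatible socle and head, and therefore must vanish.

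The main obstacle is the base case where every $\dim\Ext^1(s_i,s_j)=1$, so no linearly independent classes are available anywhere to start the recursion. The plan there is a preliminary bootstrap: replace the simples $(s_1,s_2)$ by a carefully chosen vertex-like pair of iterated extensions—mixing loops and off-diagonal arrows—whose resulting Ext-multiplicities are all at least $2$. Verifying the vertex-like axioms simultaneously for this bootstrap pair, especially that the endomorphism algebras remain scalar and that the cross-Homs vanish, is the most technically delicate part of the proof, but once it is in place the main iteration above runs uniformly and produces the infinite descending binary tree with the required pairwise-trivial-intersection property.
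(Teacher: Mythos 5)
Your inductive step has a genuine idea in it: forming two extensions
$t_1\to u_i\to t_2\xrightarrow{\xi_i}t_1[1]$
with $\xi_1,\xi_2\in\Ext^1(t_2,t_1)$ linearly independent does give a vertex-like pair (the connecting-map computation you allude to is correct, and the Euler-characteristic count shows the new $\Ext^1$-multiplicities stay $\ge 2$). But there are two real gaps.

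First, the ``symmetric construction in a different direction'' for the sibling pair $(v_1,v_2)$ does not work as described. If the $v_j$ are extensions of $t_1$ by $t_2$ (the only reading that makes $[v_j]=[t_1]+[t_2]$), then $\Hom(u_i,v_j)\ne 0$: the composite $u_i\twoheadrightarrow t_2\hookrightarrow v_j$ is a nonzero morphism, and one checks via the long exact sequence that $\Hom(u_i,v_j)\simeq\k$. So you cannot conclude disjointness of $\langle u_1,u_2\rangle$ and $\langle v_1,v_2\rangle$ from $\Hom$-vanishing, and your fallback socle/head argument is not correct either: a module filtered by the $u_i$'s acquires only a filtration whose subquotients alternate $t_1,t_2,t_1,t_2,\ldots$, not one with ``$t_1$'s at the bottom and $t_2$'s at the top,'' and since the $t_i$ are generally not simple, ``socle'' and ``head'' do not even see them directly. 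A fix is available in your own framework: once $\dim\Ext^1(t_2,t_1)\ge 2$, choose \emph{four} pairwise linearly independent classes $\xi_1,\ldots,\xi_4$ in that single $\Ext^1$ space (possible because $\k$ is infinite) and set all four children to be extensions of $t_2$ by $t_1$; then $\Hom(u_i,u_j)=\delta_{ij}\k$ for all $i,j\le 4$, both pairs $\{u_1,u_2\}$ and $\{u_3,u_4\}$ are vertex-like, and disjointness follows from $\Hom(L,L)=0$ for any common indecomposable $L$, exactly as in the paper.

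Second, and more seriously, the base case where every $\dim\Ext^1(s_i,s_j)=1$ is not handled: you name a ``bootstrap'' but do not produce it, and you yourself flag it as the hardest step. This is precisely where the paper does the real work. It writes down explicit small modules $M^{(1)},\dots,M^{(4)}$ (strings of lengths $2,4,6,8$ built from a loop $a$ at vertex~$1$, an arrow $b:2\to 1$, and a loop $c$ at vertex~$2$), checks $\Hom(M^{(i)},M^{(j)})=\delta_{ij}\k$ directly, and gets $\Ext^1\ne 0$ from the Euler form $\chi(M^{(i)},M^{(j)})=ij\,\chi(s_1\oplus s_2,s_1\oplus s_2)<0$. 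That one explicit construction simultaneously provides the bootstrap, the two sibling pairs, and the disjointness, after which the iteration is the same ``sibling disjointness propagates to all incomparable nodes'' observation you make. Without an actual base-case construction and with the sibling construction as stated, your proof does not close.
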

\begin{proof}
Let $a\colon 1\to 1, b\colon 2\to 1, c\colon 2\to 2$ be some arrows.
Define the right $\k Q$-module $M^{(1)}$ as follows: $M^{(1)}_1=\k x,$  $M^{(1)}_2=\k y$ with $x\cdot b=y$ and all other arrows in $Q$ acting by zero. Define another right $\k Q$-module $M^{(2)}$ as $M^{(2)}_1=\k \alpha \oplus \k\beta,$  $M^{(2)}_2=\k\gamma \oplus \k\delta$ with the nontrivial action of the arrows given by $\alpha \cdot a=\beta$, $\beta \cdot b=\gamma$, $\gamma \cdot c=\delta$. Then one checks that
$$\Hom (M^{(i)},M^{(j)})=\delta _{ij}\cdot\k\quad \text{and}\quad \Ext^1 (M^{(i)},M^{(j)})\ne 0 \quad \text{for all}\quad  i,j\in\{1,2\}.$$
We conclude (using Lemma~\ref{lemma on formality}) that the modules $M^{(1)},M^{(2)}$ form a vertex-like set. It follows then from Proposition~\ref{equiv cond} and  Corollary \ref{triv cor} that $\langle M^{(1)},M^{(2)}\rangle =[M^{(1)},M^{(2)}]$. Consequently, the thick subcategory $\langle M^{(1)},M^{(2)}\rangle$ is strictly smaller than $D^b_0(Q)$ (because, for example, all objects in $\langle M^{(1)},M^{(2)}\rangle$ have even-dimensional cohomology).
Moreover, by Proposition~\ref{equiv cond} we get an equivalence $\langle M^{(1)},M^{(2)}\rangle\simeq D^b_0(Q')$ where the quiver $Q'$ also satisfies the assumptions of the present proposition. We  can iterate the process, which then gives an infinite descending chain of thick subcategories of $D^b_0(Q)$. To construct a required descending binary tree of subcategories we can proceed as follows.

For convenience let us describe the modules $M^{(1)},M^{(2)}$ constructed above by the diagrams
$$M^{(1)}:\ \bullet \stackrel{b}{\to}\bullet$$
$$M^{(2)}:\ \bullet \stackrel{a}{\to}\bullet \stackrel{b}{\to}\bullet \stackrel{c}{\to}\bullet$$
Let us similarly define the right $A$ modules
$$M^{(3)}:\ \bullet \stackrel{a}{\to} \bullet \stackrel{a}{\to}\bullet \stackrel{b}{\to}\bullet \stackrel{c}{\to}\bullet \stackrel{c}{\to}\bullet$$
$$M^{(4)}:\ \bullet \stackrel{a}{\to}\bullet \stackrel{a}{\to} \bullet \stackrel{a}{\to}\bullet \stackrel{b}{\to}\bullet \stackrel{c}{\to}\bullet \stackrel{c}{\to}\bullet \stackrel{c}{\to}\bullet$$
One checks that
\begin{equation}
\label{eq_he}
\Hom (M^{(i)},M^{(j)})=\delta _{ij}\cdot\k\quad\text{and}\quad
\Ext ^1(M^{(i)},M^{(j)})\ne 0
\end{equation}
for all $i,j\in \{1,2,3,4\}$. Indeed, for $\Hom$ this can be  done by hands, for $\Ext^1$ use
$$\chi(M^{(i)},M^{(j)})=i\cdot j\cdot\chi(S_1\oplus S_2, S_1\oplus S_2)=ij(2-|\{\text{arrows in $Q$}\}|)<0.$$
 Hence the thick subcategory $\langle M^{(3)},M^{(4)}\rangle \subset D^b_0(Q)$ is also quiver-like.

We claim that the categories
$\langle M^{(1)},M^{(2)}\rangle$ and $\langle M^{(3)},M^{(4)}\rangle$ have zero intersection. Assume the converse. Since any object in $D^b_0(Q)$ is a direct sum of its cohomology, it follows that there exists a nonzero indecomposable $\k Q$-module
$L\in \langle M^{(1)},M^{(2)}\rangle\cap \langle M^{(3)},M^{(4)}\rangle$.
By Corollary~\ref{triv cor}, every indecomposable $\k Q$-module in $\langle M^{(1)},M^{(2)}\rangle$ (resp. in $\langle M^{(3)},M^{(4)}\rangle$)  has a filtration with  subquotients $M^{(1)},M^{(2)}$ (resp. $M^{(3)},M^{(4)}$). Therefore, if $M$ (resp. $N$) is an indecomposable $\k Q$-module  in $\langle M^{(1)},M^{(2)}\rangle$ (resp. in $\langle M^{(3)},M^{(4)}\rangle$), then $\Hom (M,N)=0$ by \eqref{eq_he}.  In particular, $\Hom(L,L)=0$ and thus $L=0$, a contradiction.

It is clear that we can now iterate the process to construct a descending binary tree of quiver-like categories with the required properties.
\end{proof}

\section{Thick subcategories on curves}

In this section we assume that $C$ is a smooth projective connected curve over an algebraically closed field $\k$. Our goal is to classify thick subcategories in $D^b(\coh C)$.

\begin{lemma}
\label{lemma_notorsion}
Let $\TT\subset D^b(\coh C)$ be a thick subcategory which contains a nonzero vector bundle and a nonzero torsion sheaf. Then $\TT=D^b(\coh C)$.
\end{lemma}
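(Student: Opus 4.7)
The plan is to bootstrap from $E\in \TT$ (a nonzero vector bundle of rank $r$) and $T\in \TT$ (a nonzero torsion sheaf) up to all of $D^b(\coh C)$ in three stages: produce one skyscraper, then all skyscrapers, then a line bundle and hence everything. First, the torsion sheaf $T$ has finite support and each local piece decomposes into Jordan blocks (since $\cO_{C,p}$ is a DVR), so thickness gives $\cO_{np}:=\cO_{C,p}/\mathfrak m_p^n\in \TT$ for some $p\in C$ and $n\ge 1$. Multiplication by a local uniformizer $t$ on $\cO_{np}$ has both kernel and cokernel isomorphic to $\cO_p$; by heredity of $D^b(\coh C)$ its mapping cone splits as $\cO_p[1]\oplus \cO_p$, and thickness then gives $\cO_p\in \TT$. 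From $E$ and $\cO_p$, the elementary modification sequences $0\to E\to E(p)\to E(p)|_p=\cO_p^r\to 0$ and its negative counterpart yield $E(np)\in \TT$ for every $n\in \Z$.

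Next I produce $\cO_q\in \TT$ for every $q\in C$. Fix $N$ large enough that $\cO_C(Np)$ is very ample. For any $q\in C$, the linear system $|Np|$ contains a section $\sigma$ vanishing simply at $q$ with all other zeros simple and distinct. Multiplication by $\sigma$ gives an injective map $\sigma\cdot \id\colon E\to E(Np)$ whose cokernel, supported on the zero divisor of $\sigma$, is a direct sum of length-one skyscrapers at each zero of $\sigma$ taken with multiplicity $r$; in particular $\cO_q^r$ is a direct summand. Since $E, E(Np)\in \TT$, this cokernel (which coincides with $\operatorname{cone}(\sigma\cdot \id)$ by injectivity) lies in $\TT$, and thickness extracts $\cO_q\in \TT$. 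This step is the main obstacle of the proof: both hypotheses (vector bundle plus torsion) are genuinely used here, and the geometric input is the very ampleness of $|Np|$, which lets $\sigma$ vanish simply at any prescribed point.

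With all skyscrapers in $\TT$, the elementary modification trick generalizes to $E\otimes L\in \TT$ for every $L\in \operatorname{Pic}(C)$. I extract a line bundle by induction on $r$: the case $r=1$ is trivial. For $r\ge 2$, take $M$ of large degree and set $V=E$, $W=E\otimes M$ (both in $\TT$); choose an abstract line bundle $L$ of intermediate degree so that both rank-$r$ bundles $E^\vee\otimes L$ and $L^{-1}\otimes E\otimes M$ are globally generated. A generic section of a globally generated bundle of rank $\ge 2$ on a curve is nowhere vanishing, so we get a surjection $g\colon E\twoheadrightarrow L$ and a subbundle inclusion $h\colon L\hookrightarrow W$; the composite $\phi=h\circ g$ has cokernel $W/L$, a rank-$(r-1)$ vector bundle which, by heredity, is a direct summand of $\operatorname{cone}(\phi)\in \TT$. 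Induction then produces a line bundle $L_0\in \TT$. From $L_0$ and the skyscrapers, the sequences $0\to L_0\to L_0(q)\to \cO_q\to 0$ (and negatives) yield every line bundle; iterated extensions along line-subbundle filtrations yield every vector bundle; the splitting of every coherent sheaf as torsion $\oplus$ torsion-free on a smooth curve brings in every coherent sheaf; and heredity of $D^b(\coh C)$ gives $\TT=D^b(\coh C)$.
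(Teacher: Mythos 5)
Your proof is correct, but it takes a genuinely different and considerably longer route than the paper's. The paper's argument is short and homological: starting from the skyscraper $\cO_x\in\TT$ and a chosen surjection $V\twoheadrightarrow L$ onto a line bundle with kernel $E$, it twists down repeatedly by $\cO_x$ to get $0\to E\to V^{(n)}\to L(-nx)\to 0$ for each $n$; by Serre vanishing, $\Ext^1(L(-nx),E)=H^1(E\otimes L^{-1}(nx))=0$ for $n\gg0$, so the sequence splits and $L(-nx)$ is a summand of $V^{(n)}\in\TT$. This immediately produces $L(mx)\in\TT$ for all $m$, and any $F\in\coh C$ is resolved as $0\to K'\to\oplus L(n'x)\to\oplus L(mx)\to F\to0$, so $F$ is a summand of the mapping cone of the middle map. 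Your argument avoids this splitting trick entirely and instead builds everything bottom-up: you spread the skyscraper to all points $q\in C$ via the cokernel of multiplication by a generic simply-vanishing section of $\cO(Np)$, then you extract a line bundle by a rank-induction using a chain $E\twoheadrightarrow L\hookrightarrow E\otimes M$ with nowhere-vanishing generic sections of globally generated rank-$\geq 2$ bundles, and finally rebuild line bundles, vector bundles (via line-subbundle filtrations), and all coherent sheaves. Both proofs rely on heredity to split mapping cones into kernel and cokernel. The paper's proof needs less geometric machinery (no Bertini-type reducedness, no rank induction, no filtration by line subbundles) and is essentially optimal; yours is correct and has the pedagogical merit of explicitly exhibiting every skyscraper and every line bundle in $\TT$, but the extra steps are not needed for the conclusion.
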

\begin{proof} Let $V,T\in \TT$ be a vector bundle and a torsion sheaf respectively. Let $x\in \Supp (T)$. It is easy to see that the skyscraper sheaf $\O_x$ is in $\TT$.

Choose a line bundle $L$ and a surjection $V\to L$. This gives a short exact sequence of vector bundles
$$0\to E\to V\to L\to 0.$$
Choose a surjection $L\to \O_x$ and denote by $V^{(1)}\subset V$ the kernel of the composition $V\to L\to \O_x$. Thus $V^{(1)}\in \TT$ and we obtain a short exact sequence
$$0\to E\to V^{(1)}\to L(-x)\to 0.$$
Iterating this process we get for any $n\ge 1$ a short exact sequence
$$0\to E\to V^{(n)}\to L(-nx)\to 0$$
with $V^{(n)}\in \TT$. For $n>>0$ this sequence splits, hence $L(nx)\in \TT$ for some $n$. It is then easy so see that $L(nx)\in \TT$ for all $n\in \Z$.

Let $F\in \coh C$. We can find an exact sequence of coherent sheaves
$$0\to K\to \oplus L(nx)\to \oplus L(mx)\to F\to 0.$$
The two middle terms and in $\TT$ and the category $\coh C$ is hereditary. Hence  $F\in \TT$ as a direct summand of $Cone(\oplus L(nx)\to \oplus L(mx))$. Therefore $\coh C\subset \TT$. It follows that $\TT=D^b(\coh C)$.
\end{proof}

We obtain the immediate corollary.

\begin{cor}\label{def cor} If $\TT \subset D^b(\coh C)$ is a thick subcategory, $\TT\ne 0, D^b(\coh C)$, then exactly one of the following holds:

(1) Every object in $\TT$ has torsion cohomology.

(2) Every object in $\TT$ has torsion free cohomology.
\end{cor}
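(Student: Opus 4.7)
Since $\coh C$ is hereditary (as $C$ is a smooth curve), every object in $D^b(\coh C)$ is quasi-isomorphic to the direct sum of its cohomology sheaves (shifted appropriately). Because $\TT$ is a thick subcategory of a Karoubian triangulated category, it is closed under direct summands, so for any $X\in \TT$ each cohomology sheaf $\cH^i(X)$, viewed as a complex concentrated in degree $i$, lies in $\TT$. Moreover, on a smooth curve every coherent sheaf $F$ splits canonically as $F=F_{\tor}\oplus F_{\text{free}}$, where $F_{\tor}$ is the torsion subsheaf and $F_{\text{free}}$ is torsion free (hence locally free). Thickness of $\TT$ then gives $F_{\tor},F_{\text{free}}\in \TT$ for every cohomology sheaf $F=\cH^i(X)$ of $X\in\TT$.

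The plan is now simply to combine the above with Lemma \ref{lemma_notorsion}. Suppose, for contradiction, that neither (1) nor (2) holds. Failure of (1) gives some $X\in\TT$ with a cohomology sheaf $\cH^i(X)$ that is not entirely torsion, so its torsion-free summand is a nonzero vector bundle lying in $\TT$. Failure of (2) analogously produces $Y\in\TT$ with a cohomology sheaf having a nonzero torsion summand in $\TT$. Thus $\TT$ contains both a nonzero vector bundle and a nonzero torsion sheaf, and Lemma \ref{lemma_notorsion} forces $\TT=D^b(\coh C)$, contradicting the hypothesis.

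Finally, the word \emph{exactly} is justified by the observation that the two conditions are mutually exclusive under $\TT\ne 0$: if every cohomology sheaf of every object of $\TT$ were simultaneously torsion and torsion free, every such sheaf would be zero, forcing $\TT=0$. There is no serious obstacle here; the statement is essentially a direct repackaging of Lemma \ref{lemma_notorsion} using the splitting of objects in $D^b(\coh C)$ into cohomology and the torsion/torsion-free decomposition of sheaves on a smooth curve.
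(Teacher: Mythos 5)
Your proof is correct and follows essentially the same route as the paper: decompose objects into cohomology sheaves via heredity of $\coh C$, split each sheaf into torsion and locally free parts, and invoke Lemma \ref{lemma_notorsion}. The paper's proof is just a terser version of exactly this argument (one line plus the lemma), so there is nothing to add.
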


\begin{proof} Indeed, every object in $D^b(\coh C)$ is the direct sum of its cohomology. So it remains to apply Lemma \ref{lemma_notorsion}.
\end{proof}

\begin{defi} \label{tor nontor} We will say that a thick subcategory $\TT \subset D^b(\coh C)$
is \emph{proper} if $\TT\ne 0, D^b(\coh C)$. We call $\TT$ \emph{torsion} (resp. \emph{torsion-free}) in case (1) (resp. (2)) in Corollary \ref{def cor} holds.
\end{defi}

Now we can formulate our main observation.

\begin{theo} \label{main theo} Every finitely generated thick proper subcategory $\TT \subset D^b(\coh C)$  is quiver-like.
\end{theo}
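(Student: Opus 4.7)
The plan is to invoke Proposition~\ref{prop_niceexist} after using Corollary~\ref{def cor} to split $\TT$ into two cases: torsion and torsion-free. In both cases I take $\AA=\Qcoh C$, so that $\TT\subset D^b(\coh C)\hookrightarrow D^b(\AA)$ via the natural fully faithful embedding, and $\TT$ remains a finitely generated thick subcategory there because $\coh C$ sits inside $\Qcoh C$ as a full subcategory closed under direct summands of bounded complexes.

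Before treating the two cases I would verify the preliminary hypotheses of Proposition~\ref{prop_niceexist} for $\AA=\Qcoh C$. Since $C$ is smooth of dimension~$1$, $\Qcoh C$ is hereditary and in particular every object admits an injective resolution of length $\le 1$. The derived category $D^b(\Qcoh C)$ is Karoubian because $D(\Qcoh C)$ has countable direct sums and is therefore Karoubian by the reminder in Section~2, while $D^b(\Qcoh C)\subset D(\Qcoh C)$ is thick (bounded complexes are stable under direct summands) and inherits Karoubianness. Ext-finiteness of $\TT$ is immediate from $C$ being projective: Ext-groups between coherent sheaves are finite dimensional and agree whether computed in $\coh C$ or in $\Qcoh C$.

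It then remains to exhibit, in each case, a linear function $r\colon K_0(\TT)\to\Z$ that is positive on every nonzero object of $\TT\cap\AA$; since $\TT\subset D^b(\coh C)$ this intersection consists of coherent sheaves lying in $\TT$ in degree~$0$. In the torsion case I would take $r$ induced by the $\k$-length of a coherent sheaf, which is additive on short exact sequences and strictly positive on any nonzero torsion sheaf. In the torsion-free case I would take $r$ induced by the rank function, which is additive and strictly positive on any nonzero torsion-free coherent sheaf (automatically a vector bundle on the smooth curve $C$). Either function is well defined on $K_0(\coh C)$ and hence restricts to $K_0(\TT)$.

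With all hypotheses satisfied, Proposition~\ref{prop_niceexist} forces $\TT$ to satisfy condition~(3) of Proposition~\ref{equiv cond}, and therefore $\TT$ is quiver-like. The main technical point to watch is the choice of $\AA$: the more natural $\coh C$ is hereditary but lacks enough injectives or projectives to feed Lemma~\ref{lemma on formality}, so enlarging to $\Qcoh C$ is essential, and the Karoubianness of $D^b(\Qcoh C)$ then requires the routine thick-subcategory argument noted above. Once these preparations are in place, everything else follows directly from the machinery of Sections~2 and~3.
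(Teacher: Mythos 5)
Your proof is correct and follows the same two-case strategy (torsion vs.\ torsion-free, via Corollary~\ref{def cor}) with $\AA=\Qcoh C$ and the same choice of $r$ (length in the torsion case, rank in the torsion-free case) as the paper's proof of Theorem~\ref{main theo}. One small imprecision: the $\k$-length is not well defined on all of $K_0(\coh C)$ since a vector bundle has infinite length, but in the torsion case every cohomology object of $\TT$ is torsion, so the length function is well defined directly on $K_0(\TT)$, which is all that Proposition~\ref{prop_niceexist} requires.
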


\begin{proof} We consider the two cases of Definition \ref{tor nontor}.

Case 1: $\TT$ is torsion. In this case we may apply Proposition \ref{prop_niceexist} with $\AA =\qcoh C$ and the function
$r\colon K_0(\TT)\to \bbZ$ induced by the dimension (over $\k$) of a torsion sheaf.

Case 2: $\TT$ is torsion-free. In this case we again apply Proposition \ref{prop_niceexist} with $\AA =\qcoh C$,
but take the function
$r\colon K_0(\TT)\to \bbZ$ to be induced by the rank of a vector bundle.
\end{proof}

\begin{defi} \label{def realizable} A quiver $Q$ is called \emph{realizable} if
the category $D^b_0(Q)$ is equivalent to a thick finitely generated subcategory of $D^b(\coh C)$ for a smooth projective curve over an algebraically closed field $\k$.
\end{defi}

In the next section we are going to classify the realizable quivers.  For now let us give some examples.

\begin{defi}
\label{def_T0}
We denote by ${\bf Q}_m$ the
quiver with one vertex and $m$ loops.
\end{defi}

\begin{lemma} \label{first example for curves} For any $n$, the quiver which is the disjoint union of $n$ copies of the quiver ${\bf Q}_1$ is realizable on any curve. In fact any torsion category (Definition \ref{tor nontor}) supported at $n$ distinct points is equivalent to
$$D^b_0({\bf Q}_1\sqcup \ldots \sqcup {\bf Q}_1)=D^b_0({\bf Q}_1)^{\oplus n}.$$
\end{lemma}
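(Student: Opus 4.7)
The plan is to show that the skyscraper sheaves $\O_{x_1},\ldots,\O_{x_n}$ at the $n$ support points form a vertex-like generating set for $\TT$ in the sense of Definition~\ref{defi vertex like}, and then invoke the implication $(3)\Rightarrow(1)$ of Proposition~\ref{equiv cond} to identify $\TT$ with $D^b_0(Q)$ for the quiver $Q$ determined by the $\Ext^1$ counts, which turns out to be ${\bf Q}_1\sqcup\ldots\sqcup {\bf Q}_1$.

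First I would verify that each $\O_{x_i}$ lies in $\TT$. By hypothesis there is some nonzero object of $\TT$ whose support contains $x_i$, and by the argument used at the start of the proof of Lemma~\ref{lemma_notorsion} (taking a surjection onto the skyscraper and using that $\coh C$ is hereditary), the skyscraper $\O_{x_i}$ belongs to $\TT$. Next I would check that $\{\O_{x_1},\ldots,\O_{x_n}\}$ is vertex-like. Since $C$ is a smooth curve, $\coh C$ is hereditary and $\qcoh C$ admits injective resolutions of length $\le 1$, so $\Ext^p(\O_{x_i},\O_{x_j})=0$ for $p\ne 0,1$ and every $i,j$. For $i\ne j$ the supports are disjoint, giving $\Hom^\bullet(\O_{x_i},\O_{x_j})=0$. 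For $i=j$ one has $\End(\O_{x_i})=\k$ and $\Ext^1(\O_{x_i},\O_{x_i})\cong T_{x_i}C\cong\k$. Formality of $\bbR\End(\bigoplus\O_{x_i})$ then follows from Lemma~\ref{lemma on formality} applied to $\AA=\qcoh C$.

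Then I would check that $\TT = \langle \O_{x_1},\ldots,\O_{x_n}\rangle$. Because every object of $D^b(\coh C)$ is the direct sum of its cohomology, it suffices to show that every torsion sheaf with support contained in $\{x_1,\ldots,x_n\}$ lies in $\langle \O_{x_1},\ldots,\O_{x_n}\rangle$. Such a sheaf decomposes as a direct sum of sheaves supported at the individual $x_i$, each of which is a module of finite length over the local ring $\O_{C,x_i}$ and is therefore an iterated extension of copies of $\O_{x_i}$; all such extensions lie in $\langle\O_{x_i}\rangle$. Combined with the previous step, this shows conversely that the only nonzero torsion thick subcategory supported exactly at $\{x_1,\ldots,x_n\}$ is $\langle\O_{x_1},\ldots,\O_{x_n}\rangle$ itself, so there is nothing extra to worry about.

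Finally, applying the ``moreover'' part of Proposition~\ref{equiv cond} yields an equivalence $D^b_0(Q)\xrightarrow{\sim}\TT$ sending $s_i$ to $\O_{x_i}$, where $Q$ has $n$ vertices and the number of arrows from $v_j$ to $v_i$ equals $\dim\Hom(\O_{x_j},\O_{x_i}[1])$. From the computation above this number is $\delta_{ij}$, so $Q$ consists of $n$ disjoint one-loop vertices, i.e.\ $Q={\bf Q}_1\sqcup\ldots\sqcup{\bf Q}_1$. Since all the inputs are already available, the main step is really just the local computation $\Ext^1(\O_{x_i},\O_{x_i})=\k$ together with the disjoint-support vanishing; there is no serious obstacle beyond bookkeeping.
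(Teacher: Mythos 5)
Your proof is correct and follows essentially the same approach as the paper: identify the skyscraper sheaves as a vertex-like collection, compute $\End(\O_{x_i})\cong\Ext^1(\O_{x_i},\O_{x_i})\cong\k$ with orthogonality for disjoint supports, and invoke Lemma~\ref{lemma on formality} and Proposition~\ref{equiv cond}. The paper organizes the bookkeeping slightly differently (it treats the $n=1$ case first, then takes the orthogonal sum, and concludes by observing that $\langle\O_p\rangle$ has no proper thick subcategories), whereas you verify directly that $\{\O_{x_1},\ldots,\O_{x_n}\}$ generates $\TT$ via filtrations of finite-length modules over the local DVRs; these are equivalent ways of saying the same thing. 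One small point worth tightening: the step showing $\O_{x_i}\in\TT$ is not literally ``take a surjection onto the skyscraper'' (the kernel of such a surjection need not a priori lie in $\TT$); the cleanest argument is to pass to the summand $T_{x_i}$ of some $T\in\TT$ supported at $x_i$, then use the endomorphism given by multiplication by a uniformizer, whose cone (using that $\coh C$ is hereditary) splits off shorter modules and eventually $\O_{x_i}$. The paper also leaves this as ``easy to see,'' so it is not a gap in your argument so much as a phrase to be made precise.
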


\begin{proof} Let $p$ be a point on a smooth curve $C$. Then the sky-scraper sheaf $\cO _p\in D^b(\coh C)$ is a vertex-like object with
$$\Hom (\cO _p,\cO _p)\simeq \Ext ^1(\cO _p,\cO _p)\simeq \k.$$
Hence by Proposition \ref{equiv cond} the thick subcategory $\langle \cO _p\rangle \subset D^b(\coh C)$ is equivalent to $D^b_0({\bf Q}_1)$.  Now it is clear that the thick subcategory
$$\TT =\langle \cO _{p_1},\ldots ,\cO _{p_n}\rangle $$
for $n$ different points $p_1,\ldots ,p_n\in C$ is equivalent to
$$D^b_0({\bf Q}_1\sqcup \ldots \sqcup {\bf Q}_1)=D^b_0({\bf Q}_1)^{\oplus n}.$$
It remains to note that $\langle \cO_p\rangle$ contains no proper thick subcategories and thus any thick finitely generated torsion subcategory in $D^b(\coh C)$ is of the form $\langle \cO _{p_1},\ldots ,\cO _{p_n}\rangle$ for some points $p_1,\ldots ,p_n$.
\end{proof}

\begin{prop} \label{g=0 and g=1} Let $C$ be a curve of genus $g$ and let $\TT \subset D^b(\coh C)$ be a thick finitely generated proper subcategory.

(1) If $g=0$, then
$\TT \simeq D^b_0({\bf Q}_0)$ or $\TT\simeq D^b_0({\bf Q}_1\sqcup \ldots \sqcup {\bf Q}_1).$ The first case occurs if $\TT$ is torsion-free and the second one occurs when $\TT$ is torsion.

(2) If $g=1$, then $\TT\simeq D^b_0({\bf Q}_1\sqcup \ldots \sqcup {\bf Q}_1)$.

(3) If $g=0$ or $g=1$, the category $\TT$ contains only finitely many distinct thick subcategories.
\end{prop}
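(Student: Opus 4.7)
First I would reduce via Corollary~\ref{def cor} and Theorem~\ref{main theo}: any proper finitely generated $\TT$ is either torsion or torsion-free, and in either case it is quiver-like. The torsion case is handled immediately by Lemma~\ref{first example for curves}, which gives $\TT\simeq D^b_0({\bf Q}_1)^{\oplus n}$ regardless of $g$. So both (1) and (2) reduce to the torsion-free case, and here the proof of Proposition~\ref{prop_niceexist} furnishes a vertex-like generating family $A_1,\ldots,A_n$ of \emph{actual} vector bundles on $C$ (since generators are chosen in $\coh C\cap\TT$ and $\TT$ is torsion-free).

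For (1), Grothendieck's splitting theorem together with the requirement $\End A_i=\k$ forces each $A_i$ to be a single line bundle $\cO(d_i)$. The condition $\Hom(A_i,A_j)=0$ for $i\ne j$ reads $H^0(\PP^1,\cO(d_j-d_i))=0$, i.e.\ $d_j<d_i$, which is impossible for $n\ge 2$. Hence $n=1$, $\TT=\langle\cO(d)\rangle$, and since $\Ext^1(\cO(d),\cO(d))=H^1(\PP^1,\cO)=0$ the associated quiver has one vertex and no arrows; so $\TT\simeq D^b_0({\bf Q}_0)$ by Proposition~\ref{equiv cond}.

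For (2), on an elliptic curve the canonical bundle is trivial, so Serre duality reads $\Ext^1(A_i,A_j)\simeq\Hom(A_j,A_i)^{\vee}$. The vertex-like relation $\Hom(A_i,A_j)=\delta_{ij}\,\k$ then gives $\Ext^1(A_i,A_j)=\delta_{ij}\,\k$. By the Remark after Definition~\ref{defi vertex like} the associated quiver has exactly one loop at each vertex and no further arrows, so $\TT\simeq D^b_0({\bf Q}_1\sqcup\ldots\sqcup{\bf Q}_1)$.

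For (3), by (1) and (2) it suffices to check that $D^b_0({\bf Q}_0)$ and $D^b_0({\bf Q}_1^{\sqcup n})$ each contain only finitely many thick subcategories. The first is $\Perf(\k)$ and has only $0$ and itself. For the second, pairwise orthogonality $\Hom^{\le 1}(s_i,s_j)=0$ for $i\ne j$ yields a direct sum decomposition $D^b_0({\bf Q}_1^{\sqcup n})=\bigoplus_i D^b_0({\bf Q}_1)$; any thick subcategory is closed under the idempotent projectors onto the summands and so splits accordingly. It then suffices to show each $D^b_0({\bf Q}_1)$ admits no proper nonzero thick subcategory. Realising this category as $\langle\cO_p\rangle\subset D^b(\coh C)$, any nonzero object in it has an indecomposable cohomology summand of the form $\cO_p^{(m)}:=\cO_{C,p}/\mathfrak m^m$; multiplication by a local parameter gives an endomorphism $\mu$ of $\cO_p^{(m)}$ with $\ker\mu\simeq\coker\mu\simeq\cO_p$, and since $\coh C$ is hereditary the cone of $\mu$ splits as $\cO_p\oplus\cO_p[1]$, so $\cO_p\in\langle\cO_p^{(m)}\rangle$. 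Hence any nonzero thick subcategory contains $\cO_p$ and equals $\langle\cO_p\rangle$, giving exactly $2^n$ thick subcategories of $D^b_0({\bf Q}_1^{\sqcup n})$. The main obstacle is this final cone computation; the remainder is routine given Theorem~\ref{main theo}, Lemma~\ref{first example for curves}, and Serre duality.
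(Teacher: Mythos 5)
Your proposal is correct, and parts (1) and (3) follow essentially the same route as the paper, just spelled out in more detail: the paper simply cites Grothendieck's theorem and declares $\TT=\langle\cO(n)\rangle$, and for (3) it asserts that $D^b_0({\bf Q}_0)$ and $D^b_0({\bf Q}_1)$ have no proper thick subcategories; your explicit cone computation for $\cO_p^{(m)}\xrightarrow{\mu}\cO_p^{(m)}$, together with the splitting of a thick subcategory across an orthogonal decomposition, is exactly the content being invoked.

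Part (2) is where you genuinely diverge. The paper's proof picks a nonzero indecomposable $F\in\TT$, invokes a (Fourier--Mukai) autoequivalence $\Psi$ of $D^b(\coh C)$ carrying $F$ to a torsion sheaf, concludes $\Psi(\TT)$ is torsion, and then falls back on Lemma~\ref{first example for curves}. You instead stay within the torsion-free case directly: Proposition~\ref{prop_niceexist} supplies a vertex-like family of honest vector bundles $A_1,\ldots,A_n$, and Serre duality with $\omega_C\simeq\cO_C$ gives $\Ext^1(A_j,A_i)\simeq\Hom(A_i,A_j)^\vee=\delta_{ij}\k$, so the associated quiver (by Proposition~\ref{equiv cond}) is ${\bf Q}_1^{\sqcup n}$. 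This is more elementary and self-contained — it avoids the black box of the autoequivalence group of an elliptic curve acting transitively enough to torsionize any indecomposable — and it also exhibits the quiver directly from the $\Ext$-data rather than routing through the torsion classification. The one thing to keep in front of the reader is the (correct) observation that in the torsion-free case the generators produced by Proposition~\ref{prop_niceexist} really are coherent torsion-free sheaves, hence vector bundles, which you do note. Both proofs work; yours is the one that fits more naturally into the quiver-theoretic framework the paper has set up.
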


\begin{proof} (1) Let $g=0$. Assume that $\TT$ is torsion-free. Since every vector bundle on~$\bbP ^1$ is a direct sum of line bundles $\cO (n)$, it is easy to see that $\TT =\langle \cO (n)\rangle $ for some~$n$ and hence
$$\TT \simeq D^b(\modd \k) \simeq D^b_0({\bf Q}_0).$$
If on the other hand $\TT$ is torsion, it follows from Lemma \ref{first example for curves} that
$$\TT\simeq D^b_0({\bf Q}_1\sqcup \ldots \sqcup {\bf Q}_1).$$

(2) Let $g=1$ and let $F\in \TT$ be a nonzero indecomposable object.  Then there exists an autoequivalence $\Psi $ of $D^b(\coh C)$ such that $\Psi (F)$ is a torsion sheaf.

It follows that the category $\Psi (\TT)$ is torsion. Then again by Lemma~\ref{first example for curves} we find that $\TT\simeq \Psi (\TT)\simeq D^b_0({\bf Q}_1\sqcup \ldots \sqcup {\bf Q}_1)$.

(3) This follows from (1) and (2) and the fact that the categories $D^b_0({\bf Q}_0)$ and $D^b({\bf Q}_1)$ have no proper thick subcategories.
\end{proof}

It contrast to Proposition \ref{g=0 and g=1},  thick subcategories of curves of genus $g\geq 2$ behave differently.

\begin{prop} \label{g=2} (1) Let $C$ be a curve of genus $g\geq 2$. Then there exists an infinite descending binary tree of thick subcategories in $D^b(\coh C)$ with the following property: if elements $\TT _1$, $\TT _2$ of this tree are not located one above the other, then
$$\TT _1\cap \TT _2=0.$$

(2) For any $n\geq 0$ the quiver ${\bf Q}_n$ is realizable on a curve of genus $g=n$.
\end{prop}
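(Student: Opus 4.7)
My plan is to deduce both parts from the vertex-like technology of Section~3 by realizing the quiver-like categories in question as thick subcategories of $D^b(\coh C)$ generated by suitable line bundles, and then invoking Proposition~\ref{prop_Qtree} for part~(1).

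For part~(2), I would take any line bundle $L$ on a curve $C$ of genus $n$ and verify that $\{L\}$ is vertex-like. The required cohomology computations are immediate: $\Hom(L,L)=H^0(\cO_C)=\k$ and $\Ext^1(L,L)=H^1(\cO_C)=\k^n$, while higher $\Ext$ groups vanish since $C$ is a curve. Formality of $\bbR\End(L)$ follows from Lemma~\ref{lemma on formality} applied in $\qcoh C$, which is hereditary with enough injectives (and has the same $\Ext$ groups as $\coh C$ on our object). Then Proposition~\ref{equiv cond} immediately produces an equivalence $\langle L\rangle\simeq D^b_0({\bf Q}_n)$, so ${\bf Q}_n$ is realized on $C$.

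For part~(1), I would assume $g\ge 2$ and pick two non-isomorphic degree-zero line bundles $L_1,L_2$ on $C$ (available since $\mathrm{Pic}^0(C)$ has dimension $g\ge 2$). For $i\ne j$ the line bundle $L_j\otimes L_i^{-1}$ is a non-trivial degree-zero bundle, so $\Hom(L_i,L_j)=H^0(L_j\otimes L_i^{-1})=0$, and Riemann--Roch gives
$$h^1(L_j\otimes L_i^{-1})=g-1\ge 1.$$
Combined with $\Ext^1(L_i,L_i)=H^1(\cO_C)=\k^g$, this shows that all four $\Ext^1$ groups among the $L_i$'s are nonzero, and formality again comes from Lemma~\ref{lemma on formality}. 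Hence $\{L_1,L_2\}$ is vertex-like, and by Proposition~\ref{equiv cond} the thick subcategory $\langle L_1,L_2\rangle$ is equivalent to $D^b_0(Q)$ for some quiver $Q$ on two vertices in which each of the four ordered vertex-pairs carries at least one arrow — precisely the hypothesis of Proposition~\ref{prop_Qtree}. Transporting the binary tree produced there back through this equivalence yields the desired infinite descending binary tree in $D^b(\coh C)$ with pairwise zero intersection on incomparable elements.

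The main subtle point, as I see it, is the need to use $g\ge 2$ rather than $g\ge 1$ to guarantee that the off-diagonal groups $\Ext^1(L_i,L_j)$ are nonzero; this reduces to the Riemann--Roch estimate $g-1\ge 1$ and explains why the construction cannot be carried out on an elliptic curve (consistent with Proposition~\ref{g=0 and g=1}). Everything else — formality, the passage to a quiver-like category, and the construction of the binary tree — is delivered by the general machinery already developed.
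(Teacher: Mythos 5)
Your proof is correct and takes essentially the same approach as the paper's: for part (2) you generate by a single line bundle, and for part (1) you take two distinct degree-zero line bundles, verify the vertex-like condition via Riemann--Roch (so that every $\Ext^1$ among them is nonzero when $g\geq 2$), and invoke Proposition~\ref{prop_Qtree} to produce and transport the binary tree.
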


\begin{proof} (1) Let $\cL_1,\cL_2$ be distinct line bundles of degree $0$ on $C$. Then for all $i,j\in \{1,2\}$ we have $\Hom (\cL_i,\cL_j)=\delta _{ij}\cdot \k$ and $\Ext^s(\cL_i,\cL_j)=0$ for $s\ne 0,1$. By Lemma~\ref{lemma on formality}, Definition \ref{defi vertex like} and  Proposition \ref{equiv cond} we know that $\{\cL_1,\cL_2\}$ is a vertex-like collection, and the thick subcategory  $\langle\cL_1,\cL_2\rangle\subset D^b(\coh C)$ is equivalent to $D^b_0(Q)$, where $Q$ is the quiver with vertices $v_1,v_2$ and $\dim\Ext^1(\cL_j,\cL_i)$ arrows from $v_i$ to $v_j$ for any $i,j\in\{1,2\}$. Note that for any $i,j\in\{1,2\}$ by the Riemann-Roch formula
$$\dim \Ext ^1(\cL_i,\cL_j)=g(C)-1+\delta_{ij}> 0,$$
hence by Proposition~\ref{prop_Qtree} the category $D^b_0(Q)$ has an infinite descending binary tree of thick finitely generated subcategories with the required property.

(2) It suffices to take any line bundle $\cL$ on $C$. The thick subcategory $\langle \cL \rangle \subset D^b(\coh C)$ is equivalent to $D_0^b({\bf Q}_g)$ where $g$ is the genus of $C$.
\end{proof}

\section{Realization of quivers on curves and uniqueness problems}

In this section a \emph{curve} means a smooth projective connected curve over an algebraically closed field $\k$. We complete the problem of classification of proper finitely generated thick subcategories of curves which we started in the previous section.
In view of Theorem \ref{main theo}, Lemma \ref{first example for curves}, and Proposition \ref{g=0 and g=1} it remains to answer the following questions:

\medskip

\noindent{\bf Q1}. Which quivers $Q$ are realizable by torsion-free subcategories on curves of genus $g\geq 2$? (Definitions \ref{def realizable}, \ref{tor nontor}).

\medskip

We can also ask the following question.

\medskip

\noindent{\bf Q2}. Suppose the quiver $Q$ is realizable. Is then $Q$ determined uniquely by the category
$D^b_0(Q)$?

\medskip

We start with question {\bf Q1}.

First let us summarize the relevant results from the previous sections.

\begin{prop} \label{useful summary for torsion-free}  Let $C$ be a curve and let $\TT \subset D^b(\coh C)$ be a proper thick subcategory which is torsion-free (Definition \ref{tor nontor}). Then

(1) $\TT$ is quiver-like.

(2) $\TT =\langle E_1,\ldots ,E_n\rangle$, where $\{E_1,\ldots ,E_n\}$ is a vertex-like collection of vector bundles on $C$.

(3) We have $\TT =[E_1,\ldots ,E_n]$ and $K_0(\TT)=\bigoplus _i\bbZ [E_i]$.

(4) Every indecomposable object in $\TT$ is of the form $F[m]$, where $F$ is a vector bundle that has a filtration with subquotients being $E_i$'s.

Vice versa, a vertex-like collection of vector bundles $\{E_1,\ldots ,E_n\}$ generates a torsion-free proper thick subcategory of $D^b(\coh C)$.
\end{prop}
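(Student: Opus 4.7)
Most of the assertions repackage results already in hand; the real content is part~(4).

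Part~(1) is Theorem~\ref{main theo}. For~(2) I revisit the proof of that theorem in the torsion-free case: it invokes Proposition~\ref{prop_niceexist} with $\AA=\qcoh C$ and $r$ the rank function, and that proposition in fact produces the vertex-like generating family inside $\AA\cap\TT$. In the torsion-free setting $\AA\cap\TT$ consists of torsion-free coherent sheaves on a smooth curve, and these are vector bundles. Part~(3) is then obtained by transporting Corollary~\ref{triv cor}(2)--(3) through the equivalence $D^b_0(Q)\simeq\TT$ that sends $s_i\mapsto E_i$, whose existence is guaranteed by the final clause of Proposition~\ref{equiv cond}.

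For~(4), let $B\in\TT$ be indecomposable. Because $\coh C$ is hereditary, every object of $D^b(\coh C)$ splits as a direct sum of its shifted cohomology sheaves, so $B\simeq F[m]$ for a single indecomposable coherent sheaf $F$; the torsion-free assumption on $\TT$ then makes $F$ a vector bundle. Applying Corollary~\ref{triv cor}(1) yields a chain $0=B_0,B_1,\dots,B_p=B[d]$ and distinguished triangles $B_{i-1}\to B_i\to E_{j_i}\to B_{i-1}[1]$. I would argue by induction on $i$ that every $B_i$ is a vector bundle placed in degree~$0$: assuming this for $B_{i-1}$, the long exact cohomology sequence of the triangle collapses (both $B_{i-1}$ and $E_{j_i}$ sit in degree~$0$) to a short exact sequence $0\to B_{i-1}\to B_i\to E_{j_i}\to 0$ in $\coh C$, and an extension of vector bundles on a smooth curve is again a vector bundle. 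Matching cohomological degrees of $B_p$ and $B[d]$ then forces $d=-m$, and the chain $B_0\subset B_1\subset\cdots\subset B_p=F$ exhibits $F$ as the desired filtered vector bundle with subquotients $E_{j_i}$. This induction is the only genuinely nontrivial step, and it uses heredity crucially to upgrade an abstract triangle to an honest short exact sequence.

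For the converse, a vertex-like collection $\{E_1,\dots,E_n\}$ of vector bundles generates, by Proposition~\ref{equiv cond}, a thick finitely generated quiver-like subcategory $\TT:=\langle E_1,\dots,E_n\rangle\subset D^b(\coh C)$. It is nonzero since $E_1\ne 0$, and it is proper because the full subcategory of objects with torsion-free cohomology is thick in $D^b(\coh C)$ (an extension of torsion-free sheaves on a curve is torsion-free, and direct summands and shifts preserve the property) and contains each $E_i$, hence contains $\TT$; consequently $\TT$ contains no nonzero torsion sheaf, so it is both proper and torsion-free.
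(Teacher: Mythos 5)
Your treatment of parts (1)--(4) is correct and follows the same route as the paper: part (1) via Theorem~\ref{main theo}, part (2) by re-inspecting the torsion-free case of Proposition~\ref{prop_niceexist}, part (3) by transporting Corollary~\ref{triv cor}(2)--(3) along the equivalence from Proposition~\ref{equiv cond}, and part (4) by the filtration from Corollary~\ref{triv cor}(1) together with the long exact cohomology sequence to upgrade each triangle to a short exact sequence of sheaves. That induction is precisely the mechanism the paper invokes when it says that Corollary~\ref{triv cor} makes every indecomposable of $\langle E_1,\dots,E_n\rangle$ a shifted vector bundle.

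However, your proof of the converse contains a genuine gap. You assert that the full subcategory of objects of $D^b(\coh C)$ with torsion-free cohomology is thick, justified by the remark that extensions of torsion-free sheaves are torsion-free. This is false: being a triangulated subcategory requires closure under cones, and the long exact cohomology sequence of a cone produces not only extensions but also cokernels of maps of sheaves, and cokernels of maps of vector bundles need not be torsion-free. Concretely, on any curve the cone of the inclusion $\O_C(-p)\hookrightarrow \O_C$ is the skyscraper $\O_p$; both $\O_C(-p)$ and $\O_C$ have torsion-free cohomology while $\O_p$ does not, so the subcategory you describe is not triangulated. The correct argument is the one you already carried out in part (4): the induction on the filtration from Corollary~\ref{triv cor}(1) shows directly that every indecomposable object of $\langle E_1,\dots,E_n\rangle$ is a shifted vector bundle, without ever assuming torsion-freeness of $\TT$ in advance; from this it follows that $\langle E_1,\dots,E_n\rangle$ contains no nonzero torsion sheaf, hence is proper and torsion-free. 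You had the right tool in hand but reached for a false general principle instead of reusing it.
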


\begin{proof} (1) Follows from Theorem \ref{main theo}. Then (2),(3),(4) follow from Proposition \ref{equiv cond} and Corollary  \ref{triv cor}.

For the last assertion: we know that $\langle E_1,\ldots ,E_n\rangle \subset D^b(\coh C)$ is a quiver-like subcategory of $D^b(\coh C)$. Now Corollary  \ref{triv cor} implies that all indecomposable objects of $\langle E_1,\ldots ,E_n\rangle$ are (shifted) vector bundles on $C$. Hence it is a proper torsion-free thick subcategory of $D^b(\coh C)$.
\end{proof}

Let $C$ be a curve of genus $g$. For a vector bundle $E$ on $C$ let $r(E)$ and $d(E)$ denote respectively its rank and degree. For vector bundles $E,F$
put
$$\chi (E,F)=\chi (C,E,F)=\dim \Hom (E,F)-\dim \Ext ^1(E,F).$$
By a version of Riemann-Roch formula we have
\begin{equation} \label{RR formula}
\chi (E,F)=r(E)r(F)(1-g)+r(E)d(F)-r(F)d(E).
\end{equation}

A finite quiver $Q$ with vertices $v_1,\ldots ,v_n$ is determined by a square matrix $A=(a_{ij})\in M_{n\times n}(\bbZ)$ with nonnegative entries $a_{ij}\geq 0$, such that $a_{ij}$ is the number of arrows from $v_i$ to $v_j$. Put $Q=Q(A)$. Recall (Proposition \ref{equiv cond}) that the quiver $Q(A)$ is realized by a torsion-free category on a curve $C$ if and only if there exists a vertex-like collection of vector bundles
$\{ E_1,\ldots ,E_n\}$ on $C$ such that
\begin{equation}\label{aij=ext1}
\dim \Ext ^1(E_j,E_i)=a_{ij}.
\end{equation}
By \eqref{RR formula} the equation \eqref{aij=ext1} is equivalent to the equation
\begin{equation}\label{nec cond}
a_{ij}=r_jr_i(g-1)-r_jd_i+r_id_j+\delta _{ij}\quad (=-\chi (E_j,E_i)+\delta _{ij})
\end{equation}
where $r_i=r(E_i)$ and $d_i=d(E_i)$. This gives us a necessary condition for the quiver $Q(A)$ to be realized on a curve of genus $g$. Actually this condition is also sufficient.
The following theorem answers question {\bf Q1} above.

\begin{theo} \label{answer question one} Let $g\geq 2$ and let $A=(a_{ij})\in M_{n\times n}(\bbZ)$ be a matrix with nonnegative entries. Then the quiver $Q(A)$ is realized by a torsion-free category on a given curve~$C$ of genus $g$ if and only if the following holds: there exists a collection of integers $(r_1,\ldots ,r_n,d_1,\ldots ,d_n)\in \bbZ _{>0}^n\times \bbZ ^n,$
such that
$$a_{ij}=r_jr_i(g-1)-r_jd_i+r_id_j+\delta _{ij}$$
for each pair $(i,j)$.
\end{theo}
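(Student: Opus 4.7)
\textbf{Necessity} is contained in the derivation preceding the theorem: from a realizing vertex-like collection $\{E_1,\ldots,E_n\}$ of vector bundles on $C$ one reads off $r_i := r(E_i) \in \bbZ_{>0}$ and $d_i := d(E_i) \in \bbZ$, and the displayed formula follows from the Riemann--Roch identity $\dim\Ext^1(E_j,E_i) = -\chi(E_j,E_i) + \dim\Hom(E_j,E_i)$ together with $\dim\Hom(E_j,E_i) = \delta_{ij}$.

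For \textbf{sufficiency}, the plan is to produce, given $(r_i,d_i)$ satisfying the displayed equation, a vertex-like collection $\{E_1,\ldots,E_n\}$ of vector bundles on $C$ with $r(E_i)=r_i$ and $d(E_i)=d_i$. Once such $E_i$ are found with $\End(E_i)=\k$ and $\Hom(E_j,E_i)=0$ for $i\neq j$, the vanishing of $\Ext^s(E_j,E_i)$ for $s\geq 2$ is automatic on the smooth curve $C$, formality of $\bbR\End(\oplus_i E_i)$ follows from Lemma \ref{lemma on formality} applied to $\cA=\qcoh C$, and the dimensions $\dim\Ext^1(E_j,E_i)=a_{ij}$ match by Riemann--Roch. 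Proposition \ref{useful summary for torsion-free} then yields a torsion-free thick subcategory of $D^b(\coh C)$ equivalent to $D^b_0(Q(A))$.

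I would take each $E_i$ to be a sufficiently general stable vector bundle of rank $r_i$ and degree $d_i$. For $g\geq 2$ the moduli space $M(r,d)$ of stable bundles is non-empty, smooth, and irreducible of positive dimension $r^2(g-1)+1$, and every stable bundle is simple, so $\End(E_i)=\k$ automatically. A standard stability argument forces $\Hom(E,F)=0$ whenever $\mu(E)>\mu(F)$, or $\mu(E)=\mu(F)$ and $E\not\simeq F$; so the delicate pairs are those $(E_j,E_i)$ with $\mu(E_j)>\mu(E_i)$, where the hypothesis $a_{ij}\geq 0$ translates via Riemann--Roch to the necessary numerical condition $\chi(E_j,E_i)\leq 0$.

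The main obstacle I expect is the following \emph{generic vanishing lemma}: for any stable vector bundle $F$ on $C$ of type $(r',d')$ and any $(r,d)\in\bbZ_{>0}\times\bbZ$ with $rd'-r'd+rr'(1-g)\leq 0$, the closed locus $\{E\in M(r,d) : \Hom(E,F)\neq 0\}$ is a proper subset of $M(r,d)$ (and likewise with the roles of $E$ and $F$ swapped). I would prove this by a dimension count on the incidence variety of triples $(E,F,\phi)$ with $\phi\neq 0$, parameterized by the image subsheaf $I\subset F$: stability of $E$ gives $\mu(I)>\mu(E)$ while stability of $F$ gives $\mu(I)\leq\mu(F)$, and the space of $E$'s admitting a surjection onto a given $I$ has dimension that can be bounded against $r^2(g-1)+1$ under the hypothesis $\chi\leq 0$. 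Granted the lemma, I would build the $E_i$ inductively: having chosen stable $E_1,\ldots,E_{k-1}$ pairwise Hom-orthogonal, pick $E_k\in M(r_k,d_k)$ avoiding the proper closed loci $\{E : \Hom(E,E_i)\neq 0\}$ and $\{E : \Hom(E_i,E)\neq 0\}$ for $i<k$ and distinct from $E_1,\ldots,E_{k-1}$. Since $\chi(E_k,E_i)=-a_{ik}\leq 0$ and $\chi(E_i,E_k)=-a_{ki}\leq 0$ by the displayed formula, the lemma applies in both directions, completing the construction.
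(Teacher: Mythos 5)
Your strategy coincides with the paper's: necessity is exactly the Riemann--Roch computation preceding the theorem, and sufficiency is obtained by taking the $E_i$ to be generic stable bundles of the prescribed rank and degree, so that pairwise $\Hom$-vanishing follows from a Hirschowitz-type statement. The difference is in how that vanishing is obtained. The paper simply quotes Hirschowitz's theorem in the \emph{generic-pair} form of \cite[Th.~1.2]{RTB} (if $E,F$ are generic with $\chi(E,F)\le 0$ then $\Hom(E,F)=0$) and then picks a tuple $(E_1,\ldots,E_n)\in\prod_i M(r_i,d_i)$ outside the finitely many proper closed bad loci pulled back from the pairwise products; no induction and no fixed-$F$ statement are needed. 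You instead propose to prove a strictly stronger \emph{fixed-$F$} lemma (for every stable $F$, the locus of $E\in M(r,d)$ with $\Hom(E,F)\ne 0$ is proper whenever $\chi(E,F)\le 0$) by a dimension count, and then build the $E_i$ inductively.

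That lemma is the real mathematical content of your sufficiency argument, and as written it is only asserted: it does not formally follow from the generic-pair version of Hirschowitz, and the dimension count on the incidence variety (parametrizing by the image subsheaf $I\subset F$, bounding the Quot-type strata, and comparing against $\dim M(r,d)=r^2(g-1)+1$) is a genuine piece of work rather than a routine verification --- it is essentially the heart of the Hirschowitz/Lange circle of results that \cite{RTB} is about. There is also a small slip in the stability inequalities: stability of $E$ gives $\mu(I)\ge\mu(E)$ for the image $I$ of a nonzero $\phi\colon E\to F$, with equality precisely when $\phi$ is injective, so the injective case (possible when $\mu(E)\le\mu(F)$) needs its own line. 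None of this is wrong in spirit, and all of it can be avoided by using the generic-pair version and simultaneous genericity in the product of moduli spaces, as the paper does. As it stands, though, the key $\Hom$-vanishing input is sketched, not established.
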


\begin{proof}
We already explained the  ``only if'' direction.
For the ``if'' direction we will prove the following: given a set of integers $(r_1,\ldots ,r_n,d_1,\ldots ,d_n)\in \bbZ _{>0}^n\times \bbZ ^n$ such that for each pair $(i,j)$
\begin{equation}
\label{inequal}
r_jr_i(g-1)-r_jd_i+r_id_j+\delta _{ij}\geq 0
\end{equation}
holds,
on any curve $C$ of genus $g$ there exists a vertex-like collection of vector bundles
$\{E_1,\ldots ,E_n\}$ with $d(E_i)=~d_i$ and $r(E_i)=r_i$.
Choose vector bundles $E_i$ with $d(E_i)=~d_i$ and $r(E_i)=r_i$.
For $i\neq j$ the condition \eqref{inequal} means that $\chi (E_j,E_i)\leq 0$. Recall a theorem by Hirschowitz (see~\cite[Th. 1.2]{RTB}): if $E,F$ are generic vector bundles of given rank and degree on a curve $C$ of genus $\geq 2$ and $\chi(E,F)\leq 0$ then $\Hom(E,F)=0$. Also a generic vector bundle $E$ (of given rank and degree) on $C$ is stable (see~\cite[Prop. 2.6]{NR}) and thus $\End(E)=\k$. It follows that for a generic choice of vector bundles $E_i$ with degree $d_i$ and rank $r_i$ we have $\dim \Hom (E_j,E_i)=\delta _{ij}$ and the collection $\{E_1,\ldots,E_n\}$ is vertex-like.
\end{proof}

Recall that the \emph{slope} of a vector bundle $E$ is $d(E)/r(E)$.

\begin{prop} Let $C$ be a curve of genus $g\geq 2$ and let
$(r_1,\ldots ,r_n,d_1,\ldots ,d_n)\in \bbZ _{>0}^n\times \bbZ ^n$. Then there exists a vertex-like collection of vector bundles $\{E_1,\ldots ,E_n\}$ on $C$ with $d(E_i)=d_i$ and $r(E_i)=r_i$ if and only if for all $1\leq i,j\leq n$ we have
\begin{equation}
\label{reformulation}
\left|\frac{d_i}{r_i}-\frac{d_j}{r_j}\right|\le g-1.
\end{equation}
\end{prop}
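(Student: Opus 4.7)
The plan is to reduce the statement directly to Theorem \ref{answer question one}. The existence of a vertex-like collection $\{E_1,\ldots,E_n\}$ of vector bundles on $C$ with prescribed ranks $r_i$ and degrees $d_i$ is equivalent to the realizability, by a torsion-free subcategory on $C$, of the quiver $Q(A)$ whose matrix $A=(a_{ij})$ is defined by
$$a_{ij}:=r_jr_i(g-1)-r_jd_i+r_id_j+\delta_{ij}.$$
Indeed, if such a collection exists, then by \eqref{nec cond} the matrix $A$ records exactly $\dim\Ext^1(E_j,E_i)$, so its entries are automatically nonnegative. Conversely, if all $a_{ij}\geq 0$, then $Q(A)$ is a bona fide quiver and the ``if'' direction of Theorem \ref{answer question one} produces the desired vertex-like collection with the prescribed ranks and degrees.

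Thus the problem reduces to showing that the conditions $a_{ij}\geq 0$ for all $i,j$ are equivalent to the inequalities \eqref{reformulation}. For $i=j$ we have $a_{ii}=r_i^2(g-1)+1$, which is positive since $g\geq 2$ and $r_i>0$; so the diagonal inequalities are automatic and match the trivial case $i=j$ of \eqref{reformulation}. For $i\neq j$ the condition $a_{ij}\geq 0$ rewrites, after dividing by $r_ir_j>0$, as
$$(g-1)+\frac{d_j}{r_j}-\frac{d_i}{r_i}\geq 0,\qquad \text{i.e.}\qquad \frac{d_i}{r_i}-\frac{d_j}{r_j}\leq g-1.$$
Imposing this for the ordered pair $(i,j)$ and the swapped pair $(j,i)$ simultaneously is precisely the absolute value inequality in \eqref{reformulation}.

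Combining the two directions finishes the proof. There is no real obstacle here beyond the elementary algebraic manipulation; the genuine content of the statement is already packaged into Theorem \ref{answer question one}, and the present proposition merely reformulates the realizability inequalities as a bound on the pairwise differences of slopes.
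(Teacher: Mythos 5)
Your proof is correct and takes essentially the same route as the paper: translate the slope bound into the Riemann--Roch nonnegativity conditions $a_{ij}\geq 0$ and then invoke the genericity argument behind Theorem~\ref{answer question one}. One small caveat: the \emph{statement} of that theorem only asserts that $Q(A)$ is realized by some torsion-free category (it does not fix the ranks and degrees of the realizing bundles), so you should cite its \emph{proof} --- which explicitly constructs a vertex-like collection with $r(E_i)=r_i$ and $d(E_i)=d_i$ whenever \eqref{inequal} holds --- rather than the theorem statement; this is exactly what the paper does.
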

\begin{proof} Indeed the inequality \eqref{inequal} for pairs $(i,j)$ and $(j,i)$  with $i\neq j$ is equivalent to the condition \eqref{reformulation}. Now the proposition follows by the same argument as in the proof of Theorem \ref{answer question one}.
\end{proof}

\begin{remark} If a quiver $Q(A)$ for a matrix $A=(a_{ij})$
is realizable by a torsion-free category on a curve of genus $g\geq 2$, then for all pairs of  indices $\{i,j\}$ at least one of the numbers $a_{ij}, a_{ji}$ is positive. In particular, the quiver $Q(A)$ is connected (compare with Proposition \ref{g=0 and g=1} for $g=1$ case). Indeed,
this follows from Theorem \ref{answer question one}.
\end{remark}

\subsection{Some uniqueness and non-uniqueness results}

\begin{prop}
\label{prop_unique}
Let $C,C'$ be curves, $g(C),g(C')\geq 2$. Let $E_1,\ldots,E_n$ and $E'_1,\ldots,E'_{n'}$ be two vertex-like families of vector bundles on $C$ and $C'$ respectively.  Assume that
$$\Phi\colon \langle E_1,\ldots,E_n\rangle\to \langle E'_1,\ldots,E'_{n'}\rangle$$
is an equivalence between the corresponding quiver-like categories. Then $n=n'$
 and there exist a permutation $\s\in S_n$ and $m\in\bbZ$ such that $\Phi(E_i)\simeq E'_{\s(i)}[m]$ for all $i=1,\ldots,n$.
\end{prop}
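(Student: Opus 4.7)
The plan is the following. First, I would apply Proposition \ref{useful summary for torsion-free}(4) to $\TT' \subset D^b(\coh C')$: since $\Phi$ is an equivalence and $\{E_1,\ldots,E_n\}$ is a vertex-like generating collection of $\TT$, the image $\{\Phi(E_i)\}$ is a vertex-like generating collection of $\TT'$, and every $\Phi(E_i)$, being indecomposable in a torsion-free quiver-like subcategory, must have the form $F_i[m_i]$ for some vector bundle $F_i$ on $C'$ and some integer $m_i \in \bbZ$. Comparing the two $\bbZ$-bases of $K_0(\TT')$ provided by Corollary \ref{triv cor}(2) yields $n = n'$.

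The crucial step is to show that all $m_i$ coincide. Using the vertex-like conditions for $\{\Phi(E_i)\}$ together with the hereditary vanishing $\Hom^q(F_i, F_j) = 0$ for $q \notin \{0, 1\}$, I would first establish $|m_i - m_j| \leq 1$: assuming WLOG $m_j - m_i \geq 2$, both $\Ext^1_{\TT'}(\Phi(E_i), \Phi(E_j)) = \Hom^{1+m_j-m_i}(F_i,F_j)$ and $\Ext^1_{\TT'}(\Phi(E_j), \Phi(E_i)) = \Hom^{1+m_i-m_j}(F_j,F_i)$ vanish for degree reasons, and transporting back through $\Phi$ gives $\Ext^1_\TT(E_i, E_j) = \Ext^1_\TT(E_j, E_i) = 0$, contradicting the Remark preceding this subsection applied to the vector-bundle realization $\{E_1, \ldots, E_n\}$ on $C$ of genus $\geq 2$. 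To exclude the remaining case $m_j - m_i = 1$ (for $i\ne j$), the vertex-like conditions force both $\Hom(F_i, F_j) = 0$ and $\Ext^1(F_i, F_j) = 0$, so $\chi(F_i, F_j) = 0$; Riemann-Roch \eqref{RR formula} then yields the strict slope inequality $d(F_j)/r(F_j) - d(F_i)/r(F_i) = g - 1 > 0$. Since $\End(F_i) = \End(F_j) = \k$ both bundles are simple, hence semistable, and the slope gap forces $\Hom(F_j, F_i) = 0$. A direct computation gives $\Ext^1_{\TT'}(\Phi(E_i), \Phi(E_j)) = \Ext^2(F_i, F_j) = 0$ and $\Ext^1_{\TT'}(\Phi(E_j), \Phi(E_i)) = \Hom(F_j, F_i) = 0$, and the Remark produces a contradiction once more. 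Hence all $m_i$ equal a common value $m$.

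It remains to match the two collections up to permutation. After replacing $\Phi$ by $\Phi[-m]$, each $\Phi(E_i) = F_i$ is a vector bundle in $\TT'$, and $\{F_i\}$ is itself a vertex-like generating collection of vector bundles for $\TT'$. Applying Proposition \ref{useful summary for torsion-free}(4) in both directions writes each $F_i$ as an iterated extension of $E'_k$'s, giving $[F_i] = \sum_k n_{ik}[E'_k]$ in $K_0(\TT')$ with $n_{ik} \geq 0$, and symmetrically $[E'_k] = \sum_i m_{ki}[F_i]$ with $m_{ki} \geq 0$. The matrices $(n_{ik})$ and $(m_{ki})$ are then mutually inverse integer matrices with nonnegative entries, hence both are permutation matrices. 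So there is a permutation $\s \in S_n$ with $[F_i] = [E'_{\s(i)}]$ in $K_0(\TT')$, and comparing ranks forces the filtration of $F_i$ by the $E'_k$'s to have length one, giving $F_i \simeq E'_{\s(i)}$ and therefore $\Phi(E_i) \simeq E'_{\s(i)}[m]$.

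The principal obstacle is the case $m_j - m_i = 1$, where the Ext-pattern in $\TT'$ does not collapse automatically; ruling it out requires combining vertex-like vanishing with Riemann-Roch to extract the strict slope gap of $g-1$, then invoking semistability of simple bundles to produce the further Hom-vanishing, and finally the connectivity-type property of quivers realized by a torsion-free collection of vector bundles on a curve of genus $\ge 2$ recorded in the Remark.
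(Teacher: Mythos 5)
Your overall strategy---first establish all $m_i$ are equal, then compare the two vertex-like families of vector bundles via nonnegative invertible integer matrices in $K_0$---is a legitimate and even somewhat cleaner alternative to the paper's route. (The paper shows $[F_i]=[E'_{\sigma(i)}]$ first, using a clever argument with the ratio $r(F_i)/r(E_i)=\sqrt{g(C)-1}/\sqrt{g(C')-1}$ together with column sums of the transition matrix, precisely because the $m_i$ are not yet known to coincide; only afterwards does it invoke Lemma~\ref{no shifts} to align the shifts. Once the $m_i$ are forced to agree, as you do, the matrix argument becomes a symmetric ``two mutually inverse nonnegative integer matrices are permutations'' statement.)

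However, there is a genuine flaw in your treatment of the case $m_j - m_i = 1$: the assertion ``$\End(F_i)=\k$, hence $F_i$ is semistable'' is false. Simple vector bundles on a curve of genus $\geq 2$ need not be semistable; e.g., a nonsplit extension $0\to L_1\to E\to L_2\to 0$ with $\deg L_1>\deg L_2$ and $\Hom(L_2,L_1)=0$ is simple but unstable. So the slope-gap argument for $\Hom(F_j,F_i)=0$ does not go through. Fortunately, your framework already contains a correct and more direct contradiction: the vertex-like condition in cohomological degree $2$ gives $\Hom^2(\Phi(E_j),\Phi(E_i))=\Ext^1(F_j,F_i)=0$, while from $\Hom(F_i,F_j)=\Ext^1(F_i,F_j)=0$ and \eqref{RR formula} one gets $\chi(F_j,F_i)=\chi(F_i,F_j)+\chi(F_j,F_i)=2r(F_i)r(F_j)(1-g)<0$, whence $\Ext^1(F_j,F_i)\neq 0$---a contradiction. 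This is essentially the content of Lemma~\ref{no shifts} in the paper, and using it (or the Euler characteristic computation directly) repairs your argument without any appeal to semistability.
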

\begin{proof}
By Proposition \ref{useful summary for torsion-free},  the Grothendieck group  $K_0(\langle E_1,\ldots,E_n\rangle)$ is freely generated by the classes $[E_1],\ldots,[E_n]$ and similarly for  $K_0(\langle E'_1,\ldots,E'_{n'}\rangle)$. The equivalence $\Phi$ induces an isomorphism
$K_0(\langle E_1,\ldots,E_{n}\rangle)\simeq K_0(\langle E'_1,\ldots,E'_{n'}\rangle)$, which implies that $n=n'$.

For any $i$ the object $\Phi (E_i)$ is indecomposable and so by Proposition \ref{useful summary for torsion-free} we have
\begin{equation}
\label{eq_EFm}
\Phi(E_i)\simeq F_i[m_i]
\end{equation}
for some vector bundle $F_i$ on $C'$ and $m_i\in\bbZ$.

The equation  \eqref{RR formula} implies that
$$r(E_i)^2(1-g(C))=\chi(C,E_i,E_i)=\chi(C',F_i[m_i],F_i[m_i])=\chi(C',F_i,F_i)=r(F_i)^2(1-g(C')).$$
It follows that the ratio $r(F_i)/r(E_i)=:r/r'$ does not depend on $i$, where we denote
$$r=\sqrt{g(C)-1},\ \  r'=\sqrt{g(C')-1}$$
(recall that $g(C),g(C')\geq 2$).
By Proposition~\ref{useful summary for torsion-free}, we have
$$[F_i]=\sum_jb_{ij} [E'_j]\in  K_0(\langle E'_1,\ldots,E'_{n'}\rangle)$$
for some $b_{ij}\in\bbZ_{\ge 0}$. Moreover, the matrix $B=(b_{ij})$ is invertible over $\bbZ$.
It follows that
$$r(F_i)=\sum_j b_{ij} r(E'_j),\quad r\cdot r(E_i)=\sum_j b_{ij} r'\cdot r(E'_j).$$

Denote $s_i:=r\cdot r(E_i)$ and $s'_i:=r'\cdot r(E'_i)$. We have now
$$\sum_i s_i=\sum_{ij}b_{ij}s'_j=\sum_j(s'_j\sum_i b_{ij})\ge \sum_j s'_j,$$
because $\sum_i b_{ij}\ge 1$ (since $B$ is non-degenerate and $b_{ij}\in\bbZ_{\ge 0}$ for all $i,j$). Similarly we have $\sum_j s'_j\ge \sum_i s_i$. It follows that
$\sum_i b_{ij}=1$ for any $j$ and thus $B$ is a permutation matrix.

Hence $[F_i]=[E'_{\s(i)}]$ in $K_0(\langle E'_1,\ldots,E'_n\rangle)$ for some $\s\in S_n$
and all $i$. It follows from Proposition \ref{useful summary for torsion-free} that $\Phi(E_i)\simeq E'_{\s(i)}[m_i]$.
Now Lemma~\ref{no shifts} implies that all $m_i$ in \eqref{eq_EFm} are equal.
\end{proof}

\begin{lemma}\label{no shifts} Let $C$ be a curve of genus $g\geq 2$. Let $E_1,\ldots ,E_n$ be vector bundles on~$C$ such that for some integers $m_i$ the objects $\{E_1[m_1],\ldots ,E_n[m_n]\}$ form a vertex-like collection. Then $m_i=m_j$ for all $i,j$.
\end{lemma}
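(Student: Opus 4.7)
The plan is to translate the vertex-like axioms on the collection $t_i := E_i[m_i]$ into vanishing constraints on $\Ext$-groups between the underlying bundles $E_i$, and then derive a contradiction from Riemann--Roch whenever two shifts disagree. The key identity is
$$\Hom^p(t_i, t_j) = \Ext^{p + m_j - m_i}(E_i, E_j),$$
combined with the fact that $\Ext^s(E_i, E_j) = 0$ for $s \notin \{0,1\}$ since $C$ is a curve.

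Fix $i \neq j$ and set $k := m_j - m_i$. I would first unpack the vertex-like conditions $\Hom^0(t_i, t_j) = 0$ and $\Hom^p(t_i, t_j) = 0$ for $p \neq 0, 1$. Using the identity above together with the curve vanishing, these force $\Hom(E_i, E_j) = 0$ unless $k = -1$, and $\Ext^1(E_i, E_j) = 0$ unless $k = 0$. A symmetric analysis of the pair $(j, i)$ (i.e., with $k$ replaced by $-k$) yields the analogous vanishings for $\Hom(E_j, E_i)$ and $\Ext^1(E_j, E_i)$.

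Next I would case-split on $k$. If $|k| \geq 2$, then $\Hom$ and $\Ext^1$ vanish in both directions, whence $\chi(E_i, E_j) = \chi(E_j, E_i) = 0$. But \eqref{RR formula} gives
$$\chi(E_i, E_j) + \chi(E_j, E_i) = 2 r_i r_j (1 - g),$$
which is strictly negative for $g \geq 2$ and positive ranks --- a contradiction. If $k = \pm 1$, by symmetry assume $k = 1$: then both $\Hom(E_i, E_j) = 0$ and $\Ext^1(E_i, E_j) = 0$, so $\chi(E_i, E_j) = 0$, while in the reverse direction only $\Ext^1(E_j, E_i) = 0$ is forced, giving $\chi(E_j, E_i) = \dim \Hom(E_j, E_i) \geq 0$. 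Summing, $0 \leq \chi(E_i, E_j) + \chi(E_j, E_i) = 2 r_i r_j (1 - g) < 0$, again a contradiction.

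Hence $k = 0$ for every pair $i \neq j$, proving $m_i = m_j$. The argument is essentially mechanical once the translation dictionary is set up; the only real care required is the bookkeeping of which of $\Hom(E_i, E_j)$ and $\Ext^1(E_i, E_j)$ the vertex-like axioms force to vanish in each $k$-regime. The hypothesis $g \geq 2$ enters solely through the strict negativity of $2 r_i r_j (1-g)$, which is what drives both subcases of the contradiction.
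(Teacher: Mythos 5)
Your proof is correct and rests on the same two ingredients as the paper's: the Riemann--Roch identity $\chi(E_i,E_j)+\chi(E_j,E_i)=2r_ir_j(1-g)<0$ and the translation of the vertex-like axioms through the shift identity $\Hom^p(E_i[m_i],E_j[m_j])=\Ext^{p+m_j-m_i}(E_i,E_j)$. The paper organizes it more directly---since the sum of the two Euler characteristics is negative, at least one is negative, hence at least one $\Ext^1(E_i,E_j)$ is nonzero, and the vertex-like condition then forces that shift difference to be zero with no case split---whereas you exhaust the cases $|k|\ge 2$ and $k=\pm1$ separately; this is a presentational difference only, not a different argument.
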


\begin{proof} It suffices to prove that $m_1=m_2$.
Formula \eqref{RR formula} implies that
$$\chi (E_1,E_2)+ \chi (E_2,E_1)=2(1-g)r(E_1)r(E_2)<0.$$
It follows that at least one of $\chi (E_1,E_2),\chi (E_2,E_1)$ is negative. Assume
$\chi (E_1,E_2)<0$, then
\begin{equation}
\label{condition1}
\Ext^1(E_1,E_2)\ne 0.
\end{equation}

By our assumption
\begin{equation}
\label{condition2}\Ext ^i(E_1[m_1],E_2[m_2])\neq 0\quad \text{implies that $i=1$}.
\end{equation}
Equations \eqref{condition1} and \eqref{condition2} imply that $m_1=m_2$.
\end{proof}

\begin{remark} Lemma \ref{no shifts} also holds for $g=0$, but it fails for $g=1$: if $\cL _1,\cL _2$ are distinct line bundles of the same degree, then for any $m$ the objects $\cL _1$ and $\cL _2[m]$ are orthogonal and hence the collection $\{\cL _1,\cL _2[m]\}$ is vertex-like.
\end{remark}

The following is an answer to question {\bf Q2} above.

\begin{cor}
\label{cor_unique}
Let $Q,Q'$ be quivers. Assume that   $Q$ is realizable and there is an equivalence
$D^b_0(Q)\simeq D^b_0(Q')$ (hence $Q'$ is also realizable). Then $Q\simeq Q'$.
\end{cor}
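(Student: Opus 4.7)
The plan is to realize both $D^b_0(Q)$ and $D^b_0(Q')$ inside a common thick subcategory $\TT\subset D^b(\coh C)$ of a single curve $C$, and then compare the two resulting vertex-like collections that encode $Q$ and $Q'$. First I would fix a realization $\Phi\colon D^b_0(Q)\stackrel{\sim}{\to} \TT$ afforded by the realizability of $Q$, and compose with the given equivalence to obtain $\Phi'\colon D^b_0(Q')\stackrel{\sim}{\to} \TT$. Setting $t_i:=\Phi(s_i)$ and $t'_j:=\Phi'(s'_j)$ yields two vertex-like collections generating $\TT$ (Proposition \ref{equiv cond}). Since the number of arrows of $Q$ from $v_j$ to $v_i$ equals $\dim\Ext^1(t_i,t_j)$ (and similarly for $Q'$ with $t'_j$'s), the task reduces to matching the Ext matrices of the two collections up to a common permutation.

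By Corollary \ref{def cor}, $\TT$ is either torsion or torsion-free. The main case is when $\TT$ is torsion-free on a curve of genus $g\geq 2$. Here, by Proposition \ref{useful summary for torsion-free}(4), both $\{t_i\}$ and $\{t'_j\}$ consist of shifted vector bundles, and by Lemma \ref{no shifts} all shifts inside a single vertex-like collection agree; applying a uniform shift (which does not alter the Ext matrix) I may assume both collections are honest vector bundles on $C$. Proposition \ref{prop_unique}, applied with $C'=C$ and the identity equivalence $\TT\to \TT$, then yields $n=n'$ together with a permutation $\sigma\in S_n$ and an integer $m$ satisfying $t_i\simeq t'_{\sigma(i)}[m]$. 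Consequently $\dim\Ext^1(t_i,t_j)=\dim\Ext^1(t'_{\sigma(i)},t'_{\sigma(j)})$ for every pair $(i,j)$, which is exactly the sought isomorphism $Q\simeq Q'$.

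The degenerate situations fall outside Proposition \ref{prop_unique} and require a direct classification of vertex-like collections in $\TT$. If $\TT$ is torsion, Lemma \ref{first example for curves} identifies $\TT\simeq D^b_0({\bf Q}_1)^{\oplus n}$ with the explicit realization $\langle \O_{p_1},\ldots,\O_{p_n}\rangle\subset D^b(\coh C)$; any indecomposable $t\in \TT$ with $\End(t)=\k$ is a shifted single skyscraper $\O_{p_i}[m]$ (since thicker sheaves $\O_p^{(k)}$ with $k\geq 2$ have nontrivial endomorphism algebras $\O_{C,p}/\mathfrak{m}_p^k$), and the vanishing $\Hom^p(t_i,t_j)=0$ for $p\neq 0,1$ forces the underlying points of the $t_i$'s to be pairwise distinct. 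Generation of $\TT$ by such a collection then forces $Q=Q'={\bf Q}_1^{\oplus n}$. If $\TT$ is torsion-free on $\mathbb{P}^1$, then $\TT\simeq D^b(\modd\k)$ and the only vertex-like collections are singletons $\{\k[m]\}$, so $Q=Q'={\bf Q}_0$. Torsion-free subcategories on elliptic curves are subsumed in the torsion case by Proposition \ref{g=0 and g=1}(2), so $\TT$ may be re-realized as torsion and no separate argument is needed.

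The main obstacle I expect is not the high-genus torsion-free case, where the reduction to Proposition \ref{prop_unique} is essentially formal once the uniform-shift normalization via Lemma \ref{no shifts} is in place, but rather the careful endomorphism-ring bookkeeping in the degenerate cases, confirming that the vertex-like collections inside $D^b_0({\bf Q}_1)^{\oplus n}$ and $D^b(\modd\k)$ really are unique up to permutation and shift.
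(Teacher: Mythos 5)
Your proof is correct and follows essentially the same case decomposition as the paper: the genus $\geq 2$ torsion-free case is reduced to Proposition \ref{prop_unique} (via Lemma \ref{no shifts} to normalize shifts, exactly as the paper intends), while the torsion and low-genus cases are handled by classifying vertex-like collections directly, which matches the paper's Cases 1--2 up to a change of perspective (you argue inside the geometric realization $\langle \cO_{p_1},\ldots,\cO_{p_n}\rangle$ or $D^b(\modd\k)$, whereas the paper works abstractly in $D^b_0(\mathbf{Q}_1^{\sqcup n})$ via $K$-groups, the characterization $\End(s'_i)=\k$, and orthogonal decomposition). The bookkeeping you flag as the main obstacle --- that vertex-like collections in the degenerate cases are unique up to permutation and shift --- is carried out correctly and is substantively the same computation the paper performs.
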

\begin{proof} Assume that the quiver $Q$ is realized on a curve of genus $g$. We consider several cases.

Case 1: $g=0$ and $Q$ is realized by a torsion-free category. Then by Proposition \ref{g=0 and g=1} we know that
$$D^b_0(Q)\simeq D^b_0({\bf Q}_0)\simeq D^b(\modd \k).$$
It follows that $Q={\bf Q}_0=Q'$.

Case 2: $g=1$ or $Q$ is realized by a torsion category. Then by Proposition \ref{g=0 and g=1} and Lemma \ref{first example for curves} there exists an equivalence of categories
$$\Psi \colon D^b_0(Q)\stackrel{\sim}{\to} D^b_0({\bf Q}_1\sqcup\ldots \sqcup {\bf Q}_1).$$
Comparing the $K$-groups of these categories we find that the two quivers have the same number of vertices, say $n$. Let $s_1,\ldots ,s_n\in D^b_0(Q)$ (resp. $s'_1,\ldots ,s'_n\in D^b_0({\bf Q}_1\sqcup\ldots \sqcup {\bf Q}_1)$) be the collection of simple modules corresponding to vertices. Note that the objects $s'_i\in D^b_0({\bf Q}_1\sqcup\ldots \sqcup {\bf Q}_1)$ are characterized (up to a shift) by the property that $\End (s'_i)=\k$. It follows that there exists a permutation $\s \in S_n$ such that for each $i$
$$\Psi (s_i)=s'_{\s (i)}[m_i]\quad \text{for some $m _i \in \bbZ$}.$$
Moreover, $D^b_0({\bf Q}_1\sqcup\ldots \sqcup {\bf Q}_1)$ is the orthogonal sum of its subcategories $\langle s_i'\rangle$. It follows that $D^b_0(Q)$ is also the orthogonal sum of its subcategories $\langle s_i\rangle$. Therefore $Q\simeq {\bf Q}_1\sqcup\ldots \sqcup {\bf Q}_1$ and similarly $Q'\simeq {\bf Q}_1\sqcup\ldots \sqcup {\bf Q}_1$.

Case 3: $g\ge 2$ and $Q$ is realized by a torsion-free category.
In this case an isomorphism $Q\simeq Q'$ follows from Proposition \ref{prop_unique}.
\end{proof}

\begin{remark}
Note that Corollary~\ref{cor_unique} does not hold in general if $Q$ is not assumed to be realizable. For example, let $Q$ be any tree and let $Q'$ be the quiver obtained from~$Q$ be reversing some arrows. Then $D^b_0(Q)=D^b(\modd \k Q)$, $D^b_0(Q')=D^b(\modd \k Q')$ and it is well-known that the categories $D^b(\modd \k Q)$ and $D^b(\modd \k Q')$  are equivalent.
\end{remark}

\end{document}